\newcommand{\DM}[1]{{\small\color{violet}#1}}
\newcommand{\Q}[1]{{\color{red}#1}}
\newcommand{\comment}[1]{}
\theoremstyle{plain}
\newtheorem{teo}{Theorem}[section]
\newtheorem{lemma}[teo]{Lemma}
\newtheorem{cor}[teo]{Corollary}
\newtheorem{prop}[teo]{Proposition}
\theoremstyle{definition}
\newtheorem{defi}[teo]{Definition}
\newtheorem{example}[teo]{Example}
\theoremstyle{remark}
\newtheorem{rem}{Remark}
\newcommand{\B}{\mathrm{B}}
\newcommand{\Ba}{\mathcal{B}}
\newcommand{\bbar}{\overline}
\newcommand{\CC}{\mathcal{C}}
\newcommand{\Cart}{\mathrm{Cart}}
\newcommand{\cat}{\mathop{\mathrm{cat}}}
\newcommand{\ccat}{\mathop{\mathrm{ccat}}}
\newcommand{\cD}{\mathrm{cD}}
\newcommand{\cTC}{\mathop{\mathrm{cTC}}}
\newcommand{\D}{\mathcal{D}}
\newcommand{\Dc}{\mathcal{D}}
\newcommand{\Di}{\mathrm{cD}}
\newcommand{\E}{\mathcal{E}}
\newcommand{\F}{\mathcal{F}}
\newcommand{\I}{\mathcal{I}}
\newcommand{\id}{\mathrm{id}}
\newcommand{\op}{\mathrm{op}}
\newcommand{\opCart}{\mathrm{opCart}}
\newcommand{\Sets}{\textbf{\textsf{Sets }}}
\newcommand{\Top}{\textbf{\textsf{Top }}}
\newcommand{\Uc}{\mathcal{U}}
\newcommand{\wtilde}{\widetilde}
\newcommand{\Z}{\mathbb{Z}}
\begin{document}


\title[Homotopy invariants]{Homotopy invariants  in small categories}
\author[I. Carcac\'ia-Campos, E.~Mac\'ias-Virg\'os, D.~Mosquera-Lois]{
	%
	I. Carcac\'ia-Campos,
	%
	E.~Mac\'ias-Virg\'os
	\and
	D.~Mosquera-Lois
	}

	\thanks{
	}
	
	\address[A1,A2,A3]{CITMAga
	Universidade de Santiago de Compostela, 15782-Spain}
	\email[A1,A2,A3]
	{\tt isaac.carcacia@rai.usc.es, quique.macias@usc.es, david.mosquera.lois@usc.es}

\maketitle

\begin{abstract}
Tanaka   introduced a notion of  {\em Lusternik} {\em Schnirelmann category}, denoted $\ccat\CC$, of a small category $\CC$. Among other properties, he proved an analog of Varadarajan's theorem for fibrations, relating the LS-categories of the total space, the base and the fiber.

In this paper we  recall the notion of {\em homotopic distance} $\Di(F,G)$ between two functors $F,G\colon \CC \to \D$, later introduced by us, which has $\ccat \CC=\Di(\id_\CC,\bullet)$ as a particular case. We consider another particular case, the distance $\Di(p_1,p_2)$ between the two projections  $p_1,p_2\colon \CC\times \CC \to \CC$, which we call the {\em categorical complexity} of the small category $\CC$. Moreover, we define the {\em higher categorical complexity} of a small category and we show that it can be characterized as a higher distance. 

We prove the main properties of those  invariants. As a final result we prove a Varadarajan's theorem for the homotopic distance  for Grothendieck bi-fibrations between small categories.

 \end{abstract}
\setcounter{tocdepth}{1}


\section*{Introduction}
In the thirties, Lusternik and Schnirelmann introduced a homotopic invariant of manifolds which bounded from the below the number of critical points of any smooth function defined on them (see \cite{CLOT}). For a topological space $X$, this homotopic invariant is known as the Lusternik-Schnirelmann category of $X$, denoted $\cat(X)$. 
\comment{The computation of the Lusternik-Schnirelmann category remains as an open problem for a wide amount of families of spaces (see \cite{CLOT}).}  

Twenty years ago, M. Farber introduced the topological complexity of a topological space (\cite{FARBER}). It is a homotopic invariant which measures the difficulty in finding a motion planning algorithm on the space under consideration. 

Both, the Lusternik-Schnirelmann category and the topological complexity are instances of a more general homotopic invariant called {\em homotopic distance between maps}, introduced by us in  \cite{MAC-MOSQ}. It provides a way to relate and extend several other  homotopic invariants (see \cite{MEDIT}).

Recently, it has grown the interest in developing topological and homotopic invariants in the setting of small categories. For example, this is the case of the Euler characteristic by Leinster (\cite{Leinster_1,Leinster_2}) or the integration with respect to the Euler-Poincar\'e characteristic by Tanaka (\cite{Tanaka_Euler}).  The definition of homotopy between functors by M-J. Lee (\cite{LEE}) made it possible for Tanaka (\cite{TANAKA}) to extend the Lusternik-Schnirelmann category to the setting of small categories. Later, two of the authors extended the notion of homotopic distance to the context of categories (\cite{MAC-MOSQ-FUNCT}).

In this work, we continue the study begun in \cite{MAC-MOSQ-FUNCT}. First, we introduce a novel homotopic invariant for small categories: a notion of {\em higher homotopic distance}. Second, we study fibrations between small categories culminating with a result in the spirit of Varadarajan's theorem (\cite{VAR}) for the homotopic distance for Grothendieck bi-fibrations between small categories (Theorem \ref{thm:fundamental}), 
which can be seen both as a generalization of \cite[Theorem 4.5]{TANAKA} and as an extension of our previous work (\cite{MAC-MOSQ}) to the context of categories.

\comment{These results may be understood as the categorical counterparts of the some of the ones for the homotopic distance for continuous maps between topological spaces discussed in \cite{MAC-MOSQ}}

The paper is organized as follows. Section \ref{sec:cat_dis} is devoted to fixing notation while recalling some preliminaries about homotopies and homotopic distance in the setting of small categories. Moreover, the novel notion of higher categorical distance is introduced  (Definition \ref{def:higher_cat_distanc}) and it is proved to be a particular case of a homotopic distance (Theorem \ref{thm:higher_cat_distance}). In Section \ref{sec:bifibrations} bi-fibrations are presented in this setting from scratch. The exposition is intended to be self-contained so no previous knowledge from the reader is necessary on the topic. Section \ref{sec_homotop_fiber} addresses the equivalence of fibers whose base objects are connected by an arrow (Theorem \ref{EQUIVALE}).  In Section \ref{sec:varadarajan_thm}, we state and prove the main result (Theorem \ref{thm:fundamental}).

\section{Categorical distance between functors} \label{sec:cat_dis}

We will assume that all categories are small unless stated otherwise. If $\E$ is a category, we also denote by $\E$ its set of objects, and by $\E(e_1,e_2)$ the set of arrows between the objects $e_1,e_2\in \E$. If $P\colon \E \to \Ba$ is a functor, we denote by $Pe$ and $P\phi$ the image of the object $e$ and the arrow $\phi$, respectively.

\subsection{Homotopies between functors}

We recall the notion of homotopy between functors introduced by  Lee (\cite{LEE,LEE2}). 

\begin{defi}\label{def:interval_category}
	The {\em interval category $\mathcal{I}_m$} of length $m\geq 0$ consists of $m+1$ objects with zigzag arrrows,
	 $$0 \longrightarrow 1 \longleftarrow 2 \longrightarrow \cdots \longrightarrow (\longleftarrow) m.$$
\end{defi}

Given two small categories $\CC$ and $\Dc$ we denote its product by $\CC \times \D$. Recall that the objects of  $\CC \times \D$ are pairs of objects in $\CC$ and objects in $\Dc$, and its arrows are products of arrows in  $\CC$ and arrows in $\Dc$.

\begin{defi}
	Let $F,G\colon \CC\to \Dc$ be two functors between small categories. We say that $F$  and $G$ are {\em homotopic}, denoted by $F \simeq G$,  if, for some $m\geq 0$, there exists a functor $H\colon \CC \times \I_m \rightarrow \D$, called a homotopy (of length $m$), such that $H_0=F$ and $H_m=G$. 
\end{defi}

Alternatively, the notion of homotopy between functors can be defined as follows. 
\begin{prop}
	The functors $F,G\colon \CC\to \Dc$  are  homotopic  if and only if there is a finite sequence of functors $F_0,\dots,F_m\colon \CC\to \D$, with $F_0=F$  and $F_m=G$, such that for each $i\in \{0,\ldots,m-1\}$ there is a natural transformation either between $F_i$ and $F_{i+1}$ or between $F_{i+1}$ and $F_i$. 
\end{prop}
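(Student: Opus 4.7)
The plan is to prove each implication by translating between two equivalent encodings of a step-by-step deformation: a single functor out of $\CC\times\I_m$, and a chain of intermediate functors linked by natural transformations. The essence is a form of exponential law: a functor $\CC\times\I_1\to\D$ is literally the same datum as a natural transformation between two functors $\CC\to\D$, since an arrow in $\CC\times\I_1$ is a pair $(\phi,\alpha)$ and functoriality forces a naturality square.

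For the forward direction, given a homotopy $H\colon \CC\times\I_m\to\D$ with $H_0=F$ and $H_m=G$, I would define $F_i\colon\CC\to\D$ by restricting $H$ to the slice $\CC\times\{i\}$: namely $F_i(C)=H(C,i)$ and $F_i(\phi)=H(\phi,\id_i)$. For each $i$, the unique arrow $\alpha_i$ of $\I_m$ between $i$ and $i+1$ (in one direction or the other) induces arrows $H(\id_C,\alpha_i)$ in $\D$, one for each object $C$ of $\CC$. The naturality of this family in $C$ follows from functoriality of $H$ applied to the two factorizations of $(\phi,\alpha_i)$ through $(\id_{C'},\alpha_i)(\phi,\id)$ and $(\phi,\id)(\id_C,\alpha_i)$, yielding a natural transformation between $F_i$ and $F_{i+1}$ in the prescribed direction.

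For the converse, given a chain $F=F_0,F_1,\dots,F_m=G$ joined by natural transformations $\eta_i$ in arbitrary directions, I would first realign the chain to match the alternating zigzag pattern of some $\I_n$. Whenever two consecutive $\eta_i$ point the same way, I would pad by duplicating the intermediate functor and inserting the identity natural transformation in the opposite direction; e.g., $F_i\to F_{i+1}\to F_{i+2}$ is replaced by $F_i\to F_{i+1}\xleftarrow{\id}F_{i+1}\to F_{i+2}$, and a symmetric trick at the endpoints realigns with the prescribed orientation $0\to 1$ of $\I_n$. After finitely many such insertions the enlarged chain has length $n\geq m$, fits the $\I_n$ pattern exactly, and the combined data define $H\colon\CC\times\I_n\to\D$ on objects by the padded chain and on the interval arrows by the $\eta_i$ together with the inserted identities; the naturality of each $\eta_i$ is precisely what is needed for $H$ to respect mixed arrows $(\phi,\alpha)$.

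The only real bookkeeping is the direction/parity alignment in the converse, which I would not regard as a serious obstacle but rather as the reason one has to allow the homotopy length to grow. The proof is therefore essentially formal, and the length $n$ produced by the padding is at worst around $2m$.
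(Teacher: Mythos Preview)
Your proposal is correct and follows the same line as the paper, which simply states that the Proposition follows from the Lemma identifying a natural transformation $F\Rightarrow G$ with a functor $\CC\times\I_1\to\D$; your forward direction is the slice-wise unpacking of that Lemma, and your converse is the gluing step the paper leaves implicit. Your explicit treatment of the padding needed to align an arbitrary chain of natural transformations with the fixed zigzag orientation of $\I_n$ is a detail the paper does not spell out, but it is exactly the right way to complete the argument.
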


That both definitions are equivalent follows from the following Lemma.  

\begin{lemma}There is a natural transformation $\Phi\colon F\Rightarrow G$ if and only if there exists a homotopy $H\colon \CC\times \I_1 \to \D$ such that $H_0=F$ and $H_1=G$.
\end{lemma}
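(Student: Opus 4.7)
The plan is to unpack the definition of $\CC \times \I_1$ and note that $\I_1$ has exactly one non-identity arrow $\iota\colon 0\to 1$, so every arrow of $\CC\times\I_1$ has one of three forms: $(f,\id_0)$, $(f,\id_1)$, or $(f,\iota)$ for an arrow $f\colon c\to c'$ of $\CC$. The key structural observation, which drives both directions, is the factorization
\[
(f,\iota) \;=\; (\id_{c'},\iota)\circ(f,\id_0) \;=\; (f,\id_1)\circ(\id_c,\iota),
\]
valid in $\CC\times\I_1$.

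For the ``if'' direction, I would start with a homotopy $H\colon \CC\times\I_1\to\D$ satisfying $H_0=F$ and $H_1=G$, meaning that $H(-,\id_0)=F$ on arrows and $H(-,\id_1)=G$ on arrows. I define $\Phi_c := H(\id_c,\iota)\colon Fc\to Gc$ for every object $c$ of $\CC$. Naturality, i.e.\ the identity $\Phi_{c'}\circ Ff = Gf\circ \Phi_c$ for every $f\colon c\to c'$ in $\CC$, follows by applying $H$ to the two factorizations of $(f,\iota)$ above and using functoriality of $H$ together with $H(f,\id_0)=Ff$ and $H(f,\id_1)=Gf$.

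For the ``only if'' direction, given a natural transformation $\Phi\colon F\Rightarrow G$, I would define $H$ on objects by $H(c,0)=Fc$, $H(c,1)=Gc$, and on arrows by
\[
H(f,\id_0)=Ff,\qquad H(f,\id_1)=Gf,\qquad H(f,\iota)=\Phi_{c'}\circ Ff = Gf\circ\Phi_c,
\]
the last equality being exactly the naturality of $\Phi$ and ensuring $H(f,\iota)$ is well defined. The main verification (the one slightly tedious step, and the only real obstacle) is functoriality of $H$: preservation of identities is immediate, while preservation of composition reduces to a handful of cases depending on which of $(f,\alpha)$ and $(g,\beta)$ involve $\iota$. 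The only non-trivial cases reduce, via the factorization identity displayed above, to the naturality square of $\Phi$. Once functoriality is checked, the restrictions $H_0$ and $H_1$ recover $F$ and $G$ by construction, completing the equivalence.
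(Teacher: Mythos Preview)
Your proposal is correct and follows the same approach as the paper: define the components $\Phi_c$ as $H(\id_c,\iota)$ in one direction, and in the other direction set $H$ on the three types of arrows using $F$, $G$, and $\Phi$. In fact you are more thorough than the paper, which only sketches the ``only if'' direction and omits both the value of $H$ on a general arrow $(f,\iota)$ and the functoriality check; your use of the factorization $(f,\iota)=(\id_{c'},\iota)\circ(f,\id_0)=(f,\id_1)\circ(\id_c,\iota)$ makes these points explicit.
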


\begin{proof}
 For an object, we define
$H(c,0)=Fc$, $H(c,1)=Gc$. For an arrow $f\colon c \to c'$ we define
$H(f\times\id_0)=Ff$, $H(f\times\id_1)=Gf$ and, for the only arrow $s\colon 0 \to 1$ in $\I_1$, we define
$$H(c\times s)=\Phi_c\colon Fc \to Gc.$$
\end{proof}

The homotopy relation defined above is an equivalence relation. Also,  it behaves well with respect to compositions, that is, if $F\simeq F'$ and $G \simeq G'$, then  $F \circ G \simeq F' \circ G'$ whenever $F \circ F'$ and $G \circ G'$ make sense. 

\subsection{Categorical distance between functors}

We introduce now the categorical homotopic distance between functors (see  \cite{MAC-MOSQ-FUNCT}). 


\begin{defi}
Let $\CC$ be a small category, a family $\{{\Uc}_{i}\}_{i \in I}$ of subcategories of $\CC$ if a geometric cover of $\CC$ if for every chain of arrows
$$c_0 \xrightarrow{\alpha_1}  \cdots  \xrightarrow{\alpha_m}  c_m$$ in $\CC$ there is some $i \in I$ such that the chain lies in ${\Uc}_i$.
\end{defi}

There are at least three reasons for adopting this definition: first, just covering objects would leave the arrows between different subcategories uncovered; second, an arbitrary covering of the arrows would leave the compositions of arrows between different subcategories uncovered; third, geometric coverings correspond to coverings of the classifying space of the category, thus making easy to compare the categorical distance with the topological distance of the classifying space.



\begin{defi}
Let $F,G: \CC \rightarrow \D$ be two functors between small categories. The homotopical distance $\Di(F,G)$ beween $F$ and $G$ is the least positive integer $n \geq 0$ such that there is geometric cover $\{{\Uc}_0,...,{\Uc}_n\}$ of $\CC$ such that $F\vert_{{\Uc}_i} \simeq G\vert_{{\Uc}_i}$ for every $ 0 \leq i \leq  n$. 

If there is no such cover we define $\Di(F,G)=\infty$.
 \end{defi}

We  call $\{{\Uc}_0,...,{\Uc}_n\}$   a geometric cover by {\em homotopy domains} for $F$ and $G$.

\subsection{Properties}
The following properties are obvious:
\begin{enumerate}
\item $\Di(F,G)=\Di(G,F)$.
\item $\Di(G,F)=0$ if and only if $F \simeq G$.
\item If $F  \simeq \widehat F$ and $ G \simeq \widehat G$ then $\Di(F,G)=\Di(\widehat F,\widehat G)$.
\end{enumerate}


We finish the section by introducing a notion of higher homotopic distance in the setting of small categories.

\begin{defi}
Let $\CC$ and $\D$ be two small categories and let $\{F_i\}_{i=1} ^n: \CC \rightarrow \D$ be a finite set of functors between them. The {\em higher homotopical distance} $\cD(F_1,...,F_n)$ is the least  integer $m \geq 0$ such that there is a geometric cover $\{{\Uc}_0,...,{\Uc}_m\}$  that satisfies  $F_i\vert_{{\Uc}_k} \simeq F_j\vert_{{\Uc}_k}$ for every $i,j \in \{1,...,n\}$ and $0\leq k \leq m$. 

If there is no such cover we define  $\cD(F_1,...,F_n)=\infty$.
\end{defi}


\subsection{LS-Category} The following notion of ``categorical LS-category'' is due to Tanaka \cite{TANAKA}.

A subcategory $\Uc$ of a small category $\CC$ is called {\em $0$-categorical} if the inclusion functor $\iota\colon\Uc\rightarrow \CC$ is homotopic to a constant functor.
The geometric cover $\{{\Uc}_0,...,{\Uc}_m\}$ of the small category $\CC$ is called {\em categorical} if every ${\Uc}_i$ is $0$-categorical.

\begin{defi}
Let $\CC$ be a small category. We define the (normalized) Lusternik-Schnirelmann category of $\CC$, denoted by   $\ccat(\CC)$, as  the least integer $n \geq 0$ such that there is a categorical cover of $\CC$. 

If there is no such cover we define $\ccat(\CC)$ as $\infty$.
\end{defi}

We state a result from \cite{MAC-MOSQ} and provide a proof since we will make use of it in the proof of Theorem \ref{thm:fundamental}.

\begin{prop}\label{BASE}
For every connected small category $\CC$ we have that $$\ccat(\CC)=\cD(\id_{\CC},\bullet)$$ where $\bullet$ is any constant functor.
\end{prop}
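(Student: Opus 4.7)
The plan is to prove both inequalities separately, hinging on the observation that in a connected small category all constant functors are homotopic to one another.

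First I would establish the auxiliary fact that, when $\CC$ is connected, for any two objects $c, c' \in \CC$ the constant functors $\bullet_c, \bullet_{c'} \co \E \to \CC$ (on any small category $\E$) are homotopic. The key point is that a single arrow $\alpha \co c \to c'$ in $\CC$ yields a natural transformation $\Phi \co \bullet_c \Rightarrow \bullet_{c'}$ by setting $\Phi_x = \alpha$ for every $x \in \E$; the naturality squares collapse to $\id_{c'} \circ \alpha = \alpha \circ \id_c$, which is trivially satisfied. By the Lemma, this gives a homotopy of length $1$. A zigzag of arrows connecting $c$ to $c'$ in $\CC$ therefore yields a finite chain of natural transformations (in alternating directions) between the corresponding constant functors, and hence a homotopy between $\bullet_c$ and $\bullet_{c'}$.

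For the inequality $\cD(\id_\CC, \bullet) \le \ccat(\CC)$, I would take a minimal categorical cover $\{\Uc_0, \ldots, \Uc_n\}$ of $\CC$ with $n = \ccat(\CC)$. By definition, for each $i$ the inclusion $\iota_i \co \Uc_i \hookrightarrow \CC$ is homotopic to some constant functor $\bullet_{c_i}$. Since $\id_\CC|_{\Uc_i} = \iota_i$ and $\bullet|_{\Uc_i} = \bullet_{c}|_{\Uc_i}$ is the constant functor at the fixed object $c$ defining $\bullet$, the auxiliary fact gives $\bullet_{c_i}|_{\Uc_i} \simeq \bullet|_{\Uc_i}$, and transitivity of $\simeq$ yields $\id_\CC|_{\Uc_i} \simeq \bullet|_{\Uc_i}$. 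Thus $\{\Uc_0, \ldots, \Uc_n\}$ is a geometric cover by homotopy domains for $\id_\CC$ and $\bullet$, so $\cD(\id_\CC, \bullet) \le n$.

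For the reverse inequality $\ccat(\CC) \le \cD(\id_\CC, \bullet)$, I would take a geometric cover $\{\Uc_0, \ldots, \Uc_m\}$ realizing $m = \cD(\id_\CC, \bullet)$, so that $\id_\CC|_{\Uc_i} \simeq \bullet|_{\Uc_i}$ for every $i$. Since $\bullet|_{\Uc_i}$ is itself a constant functor, the inclusion $\iota_i = \id_\CC|_{\Uc_i}$ is homotopic to a constant functor, so each $\Uc_i$ is $0$-categorical. Hence the cover is categorical, giving $\ccat(\CC) \le m$. The only nontrivial ingredient in the whole argument is the auxiliary fact about constant functors, which is where the connectedness hypothesis on $\CC$ is genuinely used; without it, one could have an inclusion that is homotopic to $\bullet_{c_i}$ but not to $\bullet$ if $c_i$ and $c$ lie in different connected components.
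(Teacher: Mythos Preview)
Your proof is correct and follows essentially the same approach as the paper's: establish that all constant functors on a connected category are homotopic, then use this to show that a categorical cover is a cover by homotopy domains for $\id_\CC$ and $\bullet$ and vice versa. If anything, your treatment of the auxiliary fact is slightly more careful, since you explicitly handle the zigzag of arrows that connectedness provides, whereas the paper speaks of ``some arrow'' connecting the two objects.
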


\begin{proof}
Let $\{{\Uc}_{0},\dots,{\Uc}_n\}$ be a geometric cover satisfying that $n=\ccat(\CC)$ and $\iota_{{\Uc}_i} \simeq \bullet_i$ for some constant functors $\bullet_i$ and every $i \in \{0,...,n\}$. We have to prove that the constant functors $\bullet_i$ and $\bullet$ are homotopic. But this follows because there is some arrow which connectes the objects $\bullet_i$ and $\bullet$, and thsi arrow defines a natural transformation between the corresponding constant functors.

It is now evident that $\id\vert_{U_i}=\iota_{{\Uc}_i} \simeq \bullet$ so $\{{\Uc}_0,\dots,{\Uc}_n\}$ is a homotopy domain for $\id_{\CC}$ and $\bullet$. So we have proved that $\ccat(\CC)\geq \cD(\id_{\CC},\bullet)$.

Alternatively if ${\Uc}_0,\dots,{\Uc}_n$ are homotopy domains for $\id_{\CC}$ and $\bullet$ it follows that $\id|_{{\Uc}_i}=\iota_{{\Uc}_i} \simeq \bullet$, so $\cD(\id_{\CC},\bullet) \geq \ccat(\CC)$.
\end{proof}

\begin{cor}
The small category  $\CC$ is contractible if and only if 
$\ccat(\CC)=0$.
\end{cor}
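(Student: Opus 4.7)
The plan is to unpack the definitions and argue directly, without needing the full strength of Proposition \ref{BASE}. Recall that $\CC$ is \emph{contractible} precisely when the identity functor $\id_\CC$ is homotopic to a constant functor.

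For the implication $\ccat(\CC)=0 \Rightarrow \CC$ contractible, I would note that $\ccat(\CC)=0$ means there is a categorical cover of $\CC$ consisting of a single subcategory $\Uc_0$. Since this family must be a geometric cover, every chain of arrows in $\CC$ must lie in $\Uc_0$, forcing $\Uc_0 = \CC$. By definition of $0$-categorical, the inclusion $\iota\colon \Uc_0 \hookrightarrow \CC$, which is just $\id_\CC$, is homotopic to a constant functor; hence $\CC$ is contractible.

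For the converse, if $\CC$ is contractible then $\id_\CC \simeq \bullet$ for some constant functor $\bullet\colon \CC \to \CC$. Setting $\Uc_0 = \CC$ gives a (trivially) geometric cover with one element, and the inclusion is $\id_\CC$, which is homotopic to $\bullet$ by hypothesis. Thus $\{\CC\}$ is a categorical cover, and so $\ccat(\CC)\leq 0$, giving equality.

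No step should present real difficulty; the only sanity check worth mentioning is that the singleton family $\{\CC\}$ qualifies as a geometric cover, which is immediate since every chain of arrows in $\CC$ trivially lies in $\CC$. One could alternatively derive the statement from Proposition \ref{BASE} (after observing that contractibility forces $\CC$ to be connected, via the zigzag of natural transformations joining $\id_\CC$ to a constant), but the direct definitional argument is cleaner and avoids that side remark.
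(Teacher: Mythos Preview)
Your argument is correct. The paper gives no explicit proof of this corollary: its placement immediately after Proposition~\ref{BASE} signals that the intended derivation is simply $\ccat(\CC)=\cD(\id_\CC,\bullet)$ together with the general fact that $\cD(F,G)=0$ if and only if $F\simeq G$. Your route is instead a direct unpacking of the definition of $\ccat$, bypassing Proposition~\ref{BASE} entirely. The advantage of your approach is that it is self-contained and sidesteps the connectedness hypothesis of Proposition~\ref{BASE}; the paper's implicit route, strictly speaking, requires first observing that either contractibility or $\ccat(\CC)=0$ forces $\CC$ to be connected before Proposition~\ref{BASE} can be invoked---a point you correctly flag at the end. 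Both approaches are equally short once written out.
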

\begin{example}If the small category $\CC$ admits finite products, then $\CC$ is contractible.
\end{example}
\begin{proof}
Fix an object $c_0$ in $\CC$ and define the functor
$c_0\times -\colon \CC \to \CC$ which sends the object $c$ into $c_0\times c$, and the morphism $f\colon c_1\to c_2$ into $\id_{c_0}\times f$. Denote by $c_0\colon \CC \to \CC$ the constant functor.

Then there are natural transformations (that is, homotopies)
$$\id_{\CC} \Leftarrow c_0\times - \Rightarrow c_0$$ 
given by the two projections
$p_1\colon c_0\times c \to c_0$
and $p_2 \colon c_0\times c \to c$.
\end{proof}

Let $\CC$ be a category and let  $c_0$ be an object in $\CC$. There are two functors $i_1, i_2\colon \CC \rightarrow \CC \times\CC$ defined as follows:
\begin{itemize}
\item For every object $c$ in $\CC$ we have that $i_1(c)=(c,c_0)$ e $i_2(c)=i_2(c_0,c)$.
\item Let $f$ be a morphism in $\CC$,then  $i_1(f)=(f,\id_{c_0})$ e $i_2(f)=(\id_{c_0},f)$.
\end{itemize}

\begin{prop}[{\cite[Proposition 6]{MAC-MOSQ-FUNCT}}]
Let $\CC$ be a small category and $C$ an objet in $\CC$. We claim that:
$$\ccat(\CC)=\cD(i_1,i_2).$$
\end{prop}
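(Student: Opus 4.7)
The plan is to route through the two projections $p_1, p_2 \colon \CC \times \CC \to \CC$. The key observation is the pair of identities $p_1 \circ i_1 = \id_\CC = p_2 \circ i_2$, together with $p_1 \circ i_2 = p_2 \circ i_1 = \bullet_{c_0}$. This links the pair $(i_1, i_2)$ to the pair $(\id_\CC, \bullet)$ appearing in Proposition \ref{BASE}, so that both inequalities will follow by composing homotopies on one side with either $p_1$ or $i_1, i_2$.

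For the inequality $\ccat(\CC) \leq \cD(i_1, i_2)$, I would take a geometric cover $\{\Uc_0, \ldots, \Uc_m\}$ realising the distance, so that $i_1|_{\Uc_k} \simeq i_2|_{\Uc_k}$ for each $k$. Post-composing these homotopies with $p_1$, and using that composition of functors respects homotopy, yields
$$\iota_{\Uc_k} = p_1 \circ i_1|_{\Uc_k} \simeq p_1 \circ i_2|_{\Uc_k} = \bullet_{c_0}|_{\Uc_k}.$$
Hence each $\Uc_k$ is $0$-categorical, so $\{\Uc_0,\dots,\Uc_m\}$ is a categorical cover of $\CC$ and $\ccat(\CC) \leq m$. (One could also apply $p_2$ to obtain the same conclusion.)

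For the reverse inequality, I would start from a categorical cover $\{\Uc_0, \ldots, \Uc_n\}$ with $\iota_{\Uc_k} \simeq \bullet_{a_k}$ for some object $a_k \in \CC$. Pre-composing this homotopy with the functors $i_1$ and $i_2$ gives $i_1|_{\Uc_k} \simeq \bullet_{(a_k, c_0)}$ and $i_2|_{\Uc_k} \simeq \bullet_{(c_0, a_k)}$. To conclude that $\{\Uc_0,\dots,\Uc_n\}$ is a homotopy domain for the pair $(i_1, i_2)$, it remains to homotope these two constants in $\CC\times \CC$ to each other. Exactly as in the proof of Proposition \ref{BASE}, a zigzag of arrows joining $a_k$ and $c_0$ in the (implicitly) connected category $\CC$ yields zigzags connecting $(a_k, c_0)$ through $(c_0, c_0)$ to $(c_0, a_k)$ in $\CC \times \CC$, and each such arrow defines a natural transformation between the corresponding constant functors. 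Concatenating these gives $\bullet_{(a_k,c_0)} \simeq \bullet_{(c_0,a_k)}$, hence $i_1|_{\Uc_k} \simeq i_2|_{\Uc_k}$.

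The main technical point is precisely that last step: the homotopy $\bullet_{(a_k,c_0)} \simeq \bullet_{(c_0,a_k)}$ is not formally automatic but requires travelling through a zigzag of constant-functor natural transformations in the product category, so the connectedness assumption on $\CC$ is essential (just as in Proposition \ref{BASE}). Apart from this, both halves of the argument are routine manipulations with the compatibility of composition and homotopy.
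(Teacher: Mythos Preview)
The paper does not actually prove this proposition; it is only stated, with a citation to \cite{MAC-MOSQ-FUNCT}. So there is no in-paper proof to compare against. Your argument is correct and is exactly the natural one: reduce to Proposition~\ref{BASE} via the identities $p_1\circ i_1=\id_\CC$, $p_1\circ i_2=\bullet_{c_0}$ for one inequality, and for the other push the contracting homotopies of a categorical cover through $i_1$ and $i_2$, then connect the two resulting constant functors in $\CC\times\CC$ using a zigzag coming from connectedness of $\CC$.

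Two small remarks. First, in the second half you write ``pre-composing this homotopy with the functors $i_1$ and $i_2$'', but what you actually do (and need) is post-compose the homotopy $\iota_{\Uc_k}\simeq\bullet_{a_k}$ with $i_1$ and with $i_2$, obtaining $i_1|_{\Uc_k}=i_1\circ\iota_{\Uc_k}\simeq i_1\circ\bullet_{a_k}=\bullet_{(a_k,c_0)}$, and similarly for $i_2$. Second, you correctly flag that connectedness of $\CC$ is needed; the paper's statement omits this hypothesis, but it is already implicit in Proposition~\ref{BASE} and is indeed required for the direction $\cD(i_1,i_2)\leq\ccat(\CC)$.
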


\subsection{Categorical complexity}

If $\CC$ is a small category, we define the diagonal functor $\Delta\colon \CC \rightarrow \CC  \times \CC $ as the  functor that takes every object $c$ to $\Delta(c)=(c,c)$ and that takes every morphism $f$ to $\Delta(f)=(f,f)$.

\begin{defi}
\begin{enumerate}\item
We say that the subcategory $\Uc$ of  $\CC \times \CC$ is a {\em Farber subcategory} if there is a functor $F\colon  \Uc \rightarrow \CC$ such that $\Delta \circ F \simeq \iota_U$.
\item
The (normalized) {\em categorical complexity} of $\CC$, denoted by $\cTC(\CC)$ is the least integer $n \geq 0$ such that there is a geometric cover $\{{\Uc}_0,\dots,{\Uc}_n\}$ of $\CC \times \CC$  by Farber subcategories. 
\end{enumerate}
If there is no such  cover we define $\cTC(\CC)$ as $\infty$.
\end{defi}

\begin{prop}[{\cite[Theorem 1]{MAC-MOSQ-FUNCT}}]
Let $\CC$ be a small category category, we have that $$\cTC(\CC)=\cD(p_1,p_2),$$ 
where $p_1,p_2\colon \CC \times \CC \to \CC$ are the projections.
\end{prop}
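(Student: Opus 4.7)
The plan is to reduce the identity to a single subcategory-level equivalence: a subcategory $\Uc \subseteq \CC \times \CC$ is a Farber subcategory if and only if $p_1|_\Uc \simeq p_2|_\Uc$. Once this is in place, a geometric cover of $\CC \times \CC$ by Farber subcategories is literally the same thing as a geometric cover by homotopy domains for the pair $(p_1, p_2)$, so the two minimisation problems defining $\cTC(\CC)$ and $\cD(p_1,p_2)$ return the same integer (or both return $\infty$).

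For the forward implication, suppose $\Uc$ is Farber with witness $F\colon \Uc \to \CC$ satisfying $\Delta\circ F \simeq \iota_\Uc$. Since composition with a fixed functor preserves the homotopy relation, post-composing with $p_j$ for $j=1,2$ yields
\[
p_j\circ \Delta\circ F \;\simeq\; p_j\circ \iota_\Uc \;=\; p_j|_\Uc.
\]
But $p_1\circ \Delta = p_2\circ \Delta = \id_\CC$, so both left-hand sides collapse to $F$, giving $F \simeq p_1|_\Uc$ and $F\simeq p_2|_\Uc$. Transitivity of $\simeq$ then yields the homotopy $p_1|_\Uc \simeq p_2|_\Uc$.

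For the converse, given $p_1|_\Uc \simeq p_2|_\Uc$ choose $F := p_1|_\Uc\colon \Uc \to \CC$. Then $\Delta\circ F$ and $\iota_\Uc$ agree in the first coordinate (both equal $p_1|_\Uc$) and their second coordinates are $p_1|_\Uc$ and $p_2|_\Uc$ respectively. A homotopy $H\colon \Uc \times \I_m \to \CC$ between these second coordinates lifts to a functor $\widetilde H\colon \Uc \times \I_m \to \CC\times\CC$ via the coordinatewise rule $\widetilde H\bigl((c_1,c_2),k\bigr) := \bigl(c_1,\, H((c_1,c_2),k)\bigr)$ on objects, and analogously on arrows; its end-stages are precisely $\Delta\circ F$ and $\iota_\Uc$, so $\Uc$ is Farber.

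The only conceptual point requiring verification is that homotopies into the product $\CC\times\CC$ can be assembled coordinatewise from homotopies in the factors; this is the main, and essentially the only, obstacle, and it follows straight from the definition of the product category and of the interval categories $\I_m$. Every other step in the argument is formal functorial manipulation together with the identity $p_j\circ \Delta = \id_\CC$.
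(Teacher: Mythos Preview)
Your proof is correct; the paper does not prove this proposition directly (it is cited from \cite{MAC-MOSQ-FUNCT}), but it does prove the higher analogue (Theorem~\ref{thm:higher_cat_distance}), and your argument is precisely the $n=2$ specialization of that proof. The only cosmetic difference is that in the ``Farber $\Rightarrow$ homotopy domain'' direction you post-compose the homotopy $\Delta\circ F\simeq\iota_\Uc$ with $p_j$ and invoke $p_j\circ\Delta=\id_\CC$, whereas the paper writes out the concatenated homotopy explicitly; the content is the same.
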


\subsection{Higher categorical complexity} We sketch now a definition of a higher topological complexity, analogous to that existing in the topological setting (\cite{RUDYAK}).

Let $\CC$ be a small category. We denote by $\CC^n$ the product   $\CC   \times \stackrel{n}{ \cdots}\times \CC$. The higher $n$-diagonal functor $\Delta_n\colon \CC\rightarrow \CC ^n$ is  the   functor that takes every object $X$ in $\CC$ to $\Delta_n X=(X,...,X)$ and that takes every morphism $f$ to $\Delta_n f=(f,...,f)$.

\begin{defi}\label{def:higher_cat_distanc}
\begin{enumerate}
\item
We say that the subcategory $\iota\colon\Omega \hookrightarrow \CC^n$ is  a {\em higher $n$-Farber subcategory} if there is a functor $F\colon \Omega \rightarrow \CC$ wich is a right homotopy inverse of $\Delta_n$, that is,   $\Delta_n \circ F \simeq i_\Omega$.
\item
The {\em  (normalized) higher $n$-categorical complexity} of $\CC$, denoted by
 $\cTC_n(\CC)$ is the least integer number $n \geq 0$ such that there is a geometric cover $\{\Omega_0,...,\Omega_n\}$ of $\CC^n$  by higher Farber subcategories. 
\end{enumerate}
\end{defi}

The following result guarantees that higher categorical complexity can be seen as a homotopic distance.

\begin{teo}\label{thm:higher_cat_distance}
The higher topological complexity equals the higher categorical distance between the projections $p_i\colon \CC^n \to \CC$, $1\leq i\leq n$, that is,
$$\cTC\nolimits_n(\CC)=\Di(p_1,...,p_n).$$
\end{teo}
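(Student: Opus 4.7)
The plan is to prove the equality by the usual pair of inequalities, exactly parallel to Proposition 1.9 for the $n=2$ case. The identity to exploit throughout is that $p_i \circ \Delta_n = \id_{\CC}$ for every $i \in \{1,\dots,n\}$.

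For the inequality $\cD(p_1,\dots,p_n) \leq \cTC_n(\CC)$, I would start from a geometric cover $\{\Omega_0,\dots,\Omega_m\}$ of $\CC^n$ realizing $\cTC_n(\CC)$, together with the right homotopy inverses $F_k\colon \Omega_k \to \CC$ satisfying $\Delta_n \circ F_k \simeq \iota_{\Omega_k}$. Post-composing with $p_i$ and using $p_i \circ \Delta_n = \id_{\CC}$ gives
$$p_i|_{\Omega_k} \;=\; p_i \circ \iota_{\Omega_k} \;\simeq\; p_i \circ \Delta_n \circ F_k \;=\; F_k,$$
and symmetrically $p_j|_{\Omega_k}\simeq F_k$, so all restrictions $p_i|_{\Omega_k}$ are mutually homotopic. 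Hence the same cover certifies the bound on $\cD(p_1,\dots,p_n)$.

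For the reverse inequality, I would start from a geometric cover $\{\Uc_0,\dots,\Uc_m\}$ of $\CC^n$ with $p_i|_{\Uc_k}\simeq p_j|_{\Uc_k}$ for every $i,j,k$, and define $F_k := p_1|_{\Uc_k}\colon \Uc_k \to \CC$. Then $\Delta_n\circ F_k\colon \Uc_k \to \CC^n$ is the functor all of whose components equal $p_1|_{\Uc_k}$, while $\iota_{\Uc_k}$ has $i$-th component $p_i|_{\Uc_k}$. Assembling the hypothesized componentwise homotopies would give $\Delta_n \circ F_k \simeq \iota_{\Uc_k}$, showing each $\Uc_k$ is a higher $n$-Farber subcategory and bounding $\cTC_n(\CC)$ from above by $m$.

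The main obstacle, and the only non-formal step, is to check that componentwise homotopies of functors valued in $\CC$ can be combined into a single homotopy of functors valued in $\CC^n$. For natural transformations this is immediate because a morphism in $\CC^n$ is a tuple of morphisms in $\CC$. For Lee-style homotopies of length $m_i$ through the interval category $\I_{m_i}$, either one deforms one component at a time (producing a homotopy of total length $\sum m_i$), or one invokes the functor $\Uc_k \times \I_{\max m_i} \to \CC^n$ built from the individual homotopies after padding each with identities so they share a common length. Either way the verification is routine and establishes the desired equality.
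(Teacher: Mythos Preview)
Your proposal is correct and follows essentially the same approach as the paper: both directions use the identity $p_i\circ\Delta_n=\id_{\CC}$, take $F_k=p_1\vert_{\Uc_k}$ for the section in the $\cTC_n\leq\Di$ direction, and pad the componentwise homotopies to a common length $\max m_i$ before assembling them into a functor into $\CC^n$. Your argument for $\Di\leq\cTC_n$ is in fact slightly cleaner than the paper's explicit concatenation, since you invoke directly that post-composition preserves homotopy to conclude $p_i\vert_{\Omega_k}\simeq F_k\simeq p_j\vert_{\Omega_k}$, whereas the paper writes out the glued homotopy $K$ of length $2m$; but this is a stylistic difference, not a substantive one.
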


\begin{proof}
( $\leq$)
Let $\Di(p_1,\dots,p_n)=m$. Let $\{U_0,...,U_m\}$ be a geometric cover of $\CC^n$ by homotopy domains, that is, subcategories $U_k$ such that $p_i\vert_{U_k} \simeq p_j\vert_{U_k}$ for all $i,j\in\{0,\dots,n\}$. We will see that they are also higher Farber subcategories. 

Indeed, for every  $U_k$ we have that the first projection $p_1\colon U_k\to \CC$ verifies $\Delta_n\circ p_1 = \iota_{U_k}$. To check that, for every $i \in \{1,...,n\}$ consider the homotopy $H_i\colon U_k\times \I_{m_i} \rightarrow \CC$  between $p_1\vert_{U_i}$ and $p_i\vert_{U_i}$. We can normalize all the homotopies by taking $m=\max \{m_i\}_{i=0}^n$ and extending $H_i\colon U_i \times \I_m \rightarrow \CC$ by identities if $j\geq m_i$. Now we can define a new homotopy 
$$G: U_k \times \I_m \rightarrow \CC$$ as 
$$G(c_1,c_2,\dots,c_n,j)=(H_1(c_1,\dots,c_n),H_2(c_1,...,c_n,j),...,H_n(c_1,...,c_n,j).$$ 

Let us check that $G$ is a homotopy between $\Delta \circ p_1$ and the inclusion $\iota_{U_k}$: 
\begin{align*}
    &\quad G(c_1,\dots,c_n,0)\\&=(H_1(c_1,\dots,c_n,0),\dots,H_n(c_1,\dots,c_n,0))\\
    &=(p_1(c_1,\dots,c_n),\dots,p_1(c_1,\dots,c_n))\\&=(c_1,\dots,c_1)\\
    &=\Delta_np_1(c_1,\dots,c_n),
\end{align*}
and 
\begin{align*}
    &\quad G(c_1,c_2,\dots,c_n,m)\\&=G(H_1(c_1,\dots,c_n,m),\dots,H_n(c_1\dots,c_n,m))\\&=(p_1(c_1,c_2,\dots, c_n),\dots,p_n(c_1,...,c_n))\\&=(c_1,c_2,\dots,c_n).
\end{align*}
This shows that $\cTC\nolimits_n(\CC)\leq m$.

($\geq$)
Now, let $\cTC\nolimits_n(\CC)=m$. Let $\{\Omega_0,...,\Omega_m\}$ be a geometric cover of $\CC^n$ by Farber subcategories. Let us fix some index $k\in\{0,...,n\}$ and let us see that $p_i\vert_{U_k} \simeq p_j\vert_{U_k}$ for every $i,j \in \{1,...,n\}$. 

Since $U_k$ is a Farber subcategory, there is a functor $F\colon U_k \rightarrow \CC$ such that $\Delta \circ F  \simeq \iota_{U_k}$. Hence,  there is a homotopy $H\colon U_k \times \I_m \rightarrow \CC^n $ such that $H_0=\Delta_n \circ F $ and $H_m=\iota_{U_k}$. We take the functor $K\colon U_k \times \I_{2m} \rightarrow \CC$
given by
$$K(c_1,\dotsc_n,l)= \begin{cases}
             p_i \circ H(c_1,...,c_n,m-l) &  \text{ if  }\quad  0 \leq l \leq m \\
             p_j \circ H(c_1,...,c_n,l-m) &  \text{if}  \quad  m \leq l \leq 2m \\
             \end{cases}
$$
The functor $K$ is well-defined because, for  $l=m$:
\begin{align*}
&\quad p_i \circ H (X_1,...,X_n,0)\\
= & p_i \circ \Delta \circ F (c_1,...,c_n)\\
=&p_i(F(c_1,\dots,c_n),\dots,F(c_1,\dots, X_n))\\
=&F(c_1,\dots,c_n)\\
=&p_j \circ \Delta \circ F (c_1,...,c_n)\\
=&p_j \circ H (X_1,...,X_n,0).
\end{align*} 

Moreover $K$ is a homotopy between $p_i$ and $p_j$:
\begin{align*}
K(c_1,\dots,c_n,0)= &p_i \circ H (c_1,\dots,c_n,m) \\
= &p_i \circ \iota_{U_k}(c1,\dots,c_n)=p_i(c_1,\dots,c_n),
\end{align*}
and
\begin{align*}
K(c_1,\dots,c_n,2m)= &p_j \circ H (c_1,\dots,c_n,m) \\
= &p_j \circ i_{U_k}(c1,\dots,c_n)=p_j(c_1,\dots,c_n).
\end{align*}
We have proven that $\Di(p_1,...,p_n)\leq m$.
\end{proof}

\subsection{Properties}
We recall several basic properties of the categorical distance. They serve to give short proofs of many results.

\begin{prop}[{\cite[Propositions 7 and 8]{MAC-MOSQ-FUNCT}}]
\begin{enumerate}
    \item  
Let $\CC$, $D$ and $\Ba$ be three small categories and let $F,G\colon \CC \rightarrow \D$ and $H:\mathfrak{D} \rightarrow \Ba$ be three functors. We have that:
$$\cD(H \circ F,H \circ G) \leq \cD(F,G).$$ 
\item
Analogously, let $\CC$, $\D$ and $\Ba$ be three small categories and let $F,G\colon \CC \rightarrow \D$ and $H:\Ba \rightarrow \CC$ be three functors. We have that:
$$\cD(H \circ F,H \circ G) \leq \cD(F,G).$$ 
\end{enumerate}
\end{prop}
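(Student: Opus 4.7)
The plan is, in both parts, to transport a geometric cover realising $\cD(F,G)$ either along $H$ or against $H$, and then to apply the elementary fact recalled earlier that the homotopy relation between functors is preserved by composition (``if $F\simeq F'$ and $G\simeq G'$, then $F\circ G\simeq F'\circ G'$ whenever the compositions make sense'').

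For part (1), let $n=\cD(F,G)$ and pick a geometric cover $\{\Uc_0,\dots,\Uc_n\}$ of $\CC$ with $F|_{\Uc_i}\simeq G|_{\Uc_i}$ for every $i$. I would argue that the same family witnesses the asserted bound: since whiskering on the left by $H$ preserves homotopies, for each $i$ one has
$$(H\circ F)|_{\Uc_i}=H\circ(F|_{\Uc_i})\simeq H\circ(G|_{\Uc_i})=(H\circ G)|_{\Uc_i},$$
so $\{\Uc_0,\dots,\Uc_n\}$ is a geometric cover by homotopy domains for the two composites and $\cD(H\circ F,H\circ G)\leq n$.

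For part (2), with $H\colon \Ba\to\CC$ the composable pair is $F\circ H,G\circ H\colon\Ba\to\D$ (the statement as printed seems to contain a typo in the direction of $H$), and the proof is symmetric but requires pulling the cover back. Given the cover $\{\Uc_0,\dots,\Uc_n\}$ from above, I would form the preimage subcategories $\mathcal{V}_i\subseteq\Ba$ consisting of all objects $b$ with $Hb\in\Uc_i$ together with all arrows $\phi$ with $H\phi\in\Uc_i$. These are genuine subcategories because $H$ preserves identities and composition and $\Uc_i$ is closed under both.

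The one verification that deserves explicit attention, and is the only mild obstacle, is that $\{\mathcal{V}_0,\dots,\mathcal{V}_n\}$ is itself a geometric cover of $\Ba$: given a chain $b_0\xrightarrow{\alpha_1}\cdots\xrightarrow{\alpha_m}b_m$ in $\Ba$, applying $H$ produces a chain in $\CC$ that by hypothesis lies in some $\Uc_i$, so by definition the original chain lies in $\mathcal{V}_i$. Once the cover is in place, the restriction $H|_{\mathcal{V}_i}$ factors through $\Uc_i$, and whiskering on the right by this restriction gives
$$(F\circ H)|_{\mathcal{V}_i}=(F|_{\Uc_i})\circ H|_{\mathcal{V}_i}\simeq (G|_{\Uc_i})\circ H|_{\mathcal{V}_i}=(G\circ H)|_{\mathcal{V}_i},$$
yielding $\cD(F\circ H,G\circ H)\leq n$ and completing the argument.
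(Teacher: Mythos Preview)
Your argument is correct in both parts: for (1) the same cover works since left whiskering by $H$ preserves the homotopy relation, and for (2) pulling back the cover along $H$ and using right whiskering is exactly the right move, including your verification that preimages of the $\Uc_i$ form a geometric cover and your observation that the printed statement has a typo (the composable pair is $F\circ H$ and $G\circ H$).

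There is nothing to compare against, however: the paper does not supply its own proof of this proposition. It is stated as a citation to \cite[Propositions~7 and~8]{MAC-MOSQ-FUNCT} and used as a black box in the corollaries that follow. Your proof is the natural one and matches what one finds in the cited source.
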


\begin{cor} 
Let $\CC$ be a connected small category. Then we have that $$\ccat(\CC) \leq \cTC(\CC).$$
\end{cor}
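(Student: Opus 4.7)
The plan is to combine the two characterizations established just before the corollary: Proposition \ref{BASE} gives $\ccat(\CC)=\cD(\id_{\CC},\bullet_{c_0})$ for any choice of base object $c_0$ in $\CC$, and the preceding proposition gives $\cTC(\CC)=\cD(p_1,p_2)$ for the two projections $p_1,p_2\colon\CC\times\CC\to\CC$. The connectedness hypothesis is precisely what lets Proposition \ref{BASE} apply, since in the proof of that proposition one needs to pass between the ``local'' constant functors provided by a categorical cover and a single globally chosen constant $\bullet_{c_0}$, and this passage requires a zigzag of arrows between their values in $\CC$.

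With the characterizations in hand, I would pick a fixed object $c_0\in\CC$ and work with the functor $i_1\colon\CC\to\CC\times\CC$ defined by $i_1(c)=(c,c_0)$ and $i_1(f)=(f,\id_{c_0})$, already introduced above. A direct inspection of definitions shows
\[
p_1\circ i_1=\id_{\CC},\qquad p_2\circ i_1=\bullet_{c_0},
\]
where $\bullet_{c_0}\colon\CC\to\CC$ denotes the constant functor at $c_0$.

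The last ingredient is the pre-composition property of the categorical distance (item (2) of the properties proposition just recalled), which states that composing on the right with any functor $H$ can only decrease the distance. Applying this with $H=i_1$, $F=p_1$, $G=p_2$ yields
\[
\cD(\id_{\CC},\bullet_{c_0})=\cD(p_1\circ i_1,\,p_2\circ i_1)\leq\cD(p_1,p_2).
\]
Translating both sides through the two characterizations gives $\ccat(\CC)\leq\cTC(\CC)$, which is the desired inequality.

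There is no real technical obstacle: the argument is a two-line computation once the characterizations and the monotonicity of $\cD$ under composition are in place. The only point worth flagging is that connectedness is genuinely used, via Proposition \ref{BASE}, to identify $\ccat(\CC)$ with the distance from $\id_{\CC}$ to a single constant functor; without connectedness one would only get the inequality up to the choice of basepoint in each component.
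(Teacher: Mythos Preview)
Your proof is correct and is exactly the argument the paper gives: the paper's one-line proof reads $\ccat(\CC)=\cD(\id_{\CC},C_0)=\cD(p_1\circ i_1,\,p_2\circ i_1)\leq \cD(p_1,p_2)=\cTC(\CC)$, which is precisely your chain of identifications followed by the pre-composition inequality. Your exposition simply unpacks each step and makes explicit where connectedness enters.
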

\begin{proof}$$\ccat(\CC)=\cD(\id_{\CC},C_0)=\cD(p_1\circ i_1,p_2 \circ i_1) \leq \cD(p_1,p_2)=\cTC(\CC).$$
\end{proof}

In what follows we note that category and categorical complexity are in some sense dual and extreme cases of categorical distance.

\begin{cor} 
Let $F,G\colon\CC \rightarrow \D$ be two functors between small categories, Then we have that
$$\cD(F,G) \leq \cTC(\D).$$
\end{cor}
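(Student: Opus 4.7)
The plan is to reduce the statement to the pre-composition monotonicity of $\cD$ stated in the previous proposition, using the pair functor into $\D\times\D$. Explicitly, I would introduce the functor $\Phi\colon \CC\to \D\times\D$ defined on objects by $\Phi c=(Fc,Gc)$ and on arrows by $\Phi\phi=(F\phi,G\phi)$; this is a well-defined functor because $\D\times\D$ is literally the categorical product, so functors into it correspond to pairs of functors with a common source.

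The central observation is then the pair of identities $p_1\circ \Phi=F$ and $p_2\circ \Phi=G$. Applying the pre-composition inequality $\cD(F'\circ H,G'\circ H)\leq \cD(F',G')$ (the second part of the cited proposition, with $F'=p_1$, $G'=p_2$ and $H=\Phi$) gives
$$\cD(F,G)=\cD(p_1\circ \Phi,\,p_2\circ \Phi)\leq \cD(p_1,p_2),$$
and the right-hand side equals $\cTC(\D)$ by the identification proved earlier in the subsection. This is exactly the required bound.

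I do not expect a genuine obstacle here, because the argument is a direct application of two results already in hand. The only verification worth mentioning is that a geometric cover of $\D\times\D$ which realises $\cTC(\D)$ pulls back along $\Phi$ to a geometric cover of $\CC$ of the same cardinality on which $F$ and $G$ become homotopic; but this is precisely what is packaged inside the pre-composition monotonicity proposition, so it requires no independent treatment. Consequently the proof reduces to a one-line composition of previously established inequalities.
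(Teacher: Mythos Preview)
Your proof is correct and is essentially the same as the paper's: both introduce the pair functor $(F,G)\colon \CC\to \D\times\D$ (which you call $\Phi$), observe that $p_1\circ(F,G)=F$ and $p_2\circ(F,G)=G$, and then apply the pre-composition inequality together with $\cTC(\D)=\cD(p_1,p_2)$. The only difference is notational.
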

\begin{proof}
We  define the functor $(F,G)\colon\CC \rightarrow \D \times \D$ such that  $p_1  \circ (F,G)=F$ and $p_2\circ(F,G)=G$. Therefore we have that
$$\cD(F,G)=cD(p_1 \circ(F,G), p_2 \circ (F,G)) \leq \cD(p_1,p_2)=\cTC(\D).$$ 
\end{proof}

\begin{prop} [{\cite[Theorem 2]{MAC-MOSQ-FUNCT}}]
Let $F,G\colon\CC \rightarrow \D$ be two functors between small categories, Then 
$$\cD(F,G) \leq \ccat(\CC).$$
\end{prop}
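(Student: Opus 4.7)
The plan is to take any categorical cover that realizes $\ccat(\CC)$ and argue, via the composition properties established earlier, that it doubles as a homotopy cover for the pair $(F,G)$. Set $n=\ccat(\CC)$ and fix a categorical cover $\{U_0,\ldots,U_n\}$ of $\CC$. By definition, for each $i$ there is some $c_i\in\CC$ such that the inclusion $\iota_{U_i}\colon U_i\hookrightarrow\CC$ is homotopic to the constant functor $\bullet_{c_i}\colon U_i\to\CC$.

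First I would transport these contractions through $F$ and $G$. Using the fact that homotopy of functors is preserved under composition (stated just after the definition of homotopy), post-composing the homotopy $\iota_{U_i}\simeq\bullet_{c_i}$ with $F$ and with $G$ gives
$$F|_{U_i}\;=\;F\circ\iota_{U_i}\;\simeq\;F\circ\bullet_{c_i}\;=\;\bullet_{F(c_i)},\qquad G|_{U_i}\;\simeq\;\bullet_{G(c_i)},$$
so on each member of the cover both restrictions collapse, up to homotopy, to constant functors $U_i\to\D$.

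It then remains to bridge these two constants. Natural transformations between constant functors $\bullet_a,\bullet_b\colon U_i\to\D$ correspond to single arrows $a\to b$ in $\D$ (the same arrow at each object, automatically natural), so $\bullet_{F(c_i)}\simeq\bullet_{G(c_i)}$ whenever $F(c_i)$ and $G(c_i)$ are joined by a zigzag of arrows in $\D$ — precisely the device used at the beginning of the proof of Proposition \ref{BASE}. Chaining the three homotopies on each $U_i$ yields $F|_{U_i}\simeq G|_{U_i}$, so $\{U_0,\ldots,U_n\}$ is a homotopy cover for $(F,G)$ and $\cD(F,G)\le n=\ccat(\CC)$. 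I expect the only delicate point to be this last, connectedness-type step; everything else is a routine unwinding of the definitions together with the composition-preserves-homotopy property.
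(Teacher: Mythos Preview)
The paper gives no proof here; it only cites \cite[Theorem 2]{MAC-MOSQ-FUNCT}. Your overall strategy --- take a categorical cover realizing $\ccat(\CC)$, push the contractions through $F$ and $G$, and then bridge the two resulting constant functors --- is exactly the natural one and is how the topological analogue is proved.

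The step you yourself flag as ``delicate'' is, however, a genuine gap that cannot be closed under the hypotheses as stated. You need $F(c_i)$ and $G(c_i)$ to lie in the same connected component of $\D$, and nothing in the statement forces this. A concrete counterexample: let $\CC$ be the terminal category $\bullet$ and let $\D$ be the discrete category on two objects $a,b$, with $F(\bullet)=a$ and $G(\bullet)=b$. Then $\ccat(\CC)=0$, while no nonempty subcategory of $\CC$ can be a homotopy domain for $F$ and $G$ (there is no zigzag from $a$ to $b$ in $\D$), so $\cD(F,G)=\infty$. Thus the proposition, as written, is false; the cited source presumably carries a connectedness hypothesis on $\D$ that has been omitted here. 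With $\D$ assumed connected your argument is complete: any two constant functors into $\D$ are then homotopic via a zigzag of natural transformations, exactly as in the proof of Proposition~\ref{BASE}, and the three homotopies on each $U_i$ concatenate to give $F\vert_{U_i}\simeq G\vert_{U_i}$.
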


\section{Bi-fibrations} \label{sec:bifibrations}
We recall the notions of cartesian morphism and Grothendieck fibration. We follow the references  \cite{DAZIN}, \cite[Appendix A]{KOEN}, \cite{STREITCHER}, \cite{VISTOLI} and \cite[Chapter 12]{PENNER}, as well as \cite[Section 4]{TANAKA}.

\subsection{Cartesian arrows}

Let $P\colon \E \to \Ba$ be a functor.

 \begin{defi}
The morphism $\phi\in \E(e_1,e_2)$ is {\em cartesian} (with respect to $P$) if, for every arrow $\beta \in\E(e,e_2)$ and  every arrow
 $\bbar\alpha\in\Ba(Pe,Pe_1)$  such that $P\phi\circ \bbar\alpha=P\beta$,
there exists a unique arrow $\alpha\in\E(e,e_1)$ such that $\phi\circ\alpha=\beta$ and \Q{$P\alpha=\bbar\alpha$} \DM{(see Diagram (\ref{diag:def_cartesian}))}.
\begin{equation}\label{diag:def_cartesian}
\begin{tikzcd}
& e_1 \ar{d}{\phi} \\
 e \arrow[ur,dashrightarrow,"\alpha"]\arrow[r,"\beta"] & e_2 
\end{tikzcd}
\quad \quad
 \begin{tikzcd}
& Pe_1 \arrow[d,"P\phi"] \\
Pe\arrow[ur,"{\bbar\alpha}"]\arrow[r,"P\beta"] & Pe_2
\end{tikzcd}
\end{equation}
\end{defi}


 The name {\em cartesian} seems to have its origin in the following characterization, whose proof is left to the reader:
 \begin{prop}The morphism $\phi$ is cartesian if and only if the following commutative square in the category of \Sets is a pullback:
$$\begin{tikzcd}
\E(e,e_1)\arrow[d,"{P(-)}"]\arrow[r,"{\phi\circ-}"]&\E(e,e_2)\arrow[d,"{P(-)}"]\\
\Ba(Pe,Pe_1)\ \ \arrow[r,"{P(\phi)\circ-}"]&\ \ \Ba(Pe,Pe_2)
\end{tikzcd}
$$
\end{prop}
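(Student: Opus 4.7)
The plan is to observe that both conditions are literal reformulations of the same universal property, so the proof amounts to carefully unfolding definitions.

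First, I would spell out the pullback in $\Sets$ concretely. The fiber product of the cospan
$$\E(e,e_2)\xrightarrow{\;P(-)\;}\Ba(Pe,Pe_2)\xleftarrow{\;P\phi\circ-\;}\Ba(Pe,Pe_1)$$
is the set
$$\E(e,e_2)\times_{\Ba(Pe,Pe_2)}\Ba(Pe,Pe_1)=\{(\beta,\bbar\alpha):P\beta=P\phi\circ\bbar\alpha\},$$
with the obvious projections. The commutativity of the square in the statement (that is, the identity $P(\phi\circ\alpha)=P\phi\circ P\alpha$) provides a canonical comparison map
$$\Psi\colon \E(e,e_1)\longrightarrow \E(e,e_2)\times_{\Ba(Pe,Pe_2)}\Ba(Pe,Pe_1),\qquad \Psi(\alpha)=(\phi\circ\alpha,\,P\alpha).$$
Since pullbacks in $\Sets$ are precisely universal cones, the square is a pullback if and only if $\Psi$ is a bijection.

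Second, I would translate the bijectivity of $\Psi$ into logical terms. Surjectivity says that every compatible pair $(\beta,\bbar\alpha)$ (i.e.\ one with $P\beta=P\phi\circ\bbar\alpha$) is of the form $(\phi\circ\alpha,P\alpha)$ for some $\alpha\in\E(e,e_1)$, that is, there exists $\alpha$ with $\phi\circ\alpha=\beta$ and $P\alpha=\bbar\alpha$. Injectivity says any such $\alpha$ is unique. Together, these two facts coincide word for word with the existence-and-uniqueness clause in the definition of a cartesian morphism, so the equivalence follows.

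No serious obstacle arises here; the only care needed is to write down the comparison map correctly and verify that both directions of the biconditional flow from the same bijection. The statement is genuinely a repackaging of the definition, which is why it is presented as a characterization and the proof is left to the reader in the text.
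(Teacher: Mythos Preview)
Your argument is correct: unwinding the concrete description of the fiber product in $\Sets$ and checking that the comparison map $\alpha\mapsto(\phi\circ\alpha,P\alpha)$ is a bijection is exactly the existence-and-uniqueness clause in the definition of a cartesian arrow. Since the paper explicitly leaves this proof to the reader, there is no alternative argument to compare with; your write-up is precisely the kind of verification the authors had in mind.
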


\begin{example}Let $P\colon \CC \to \CC$ the identity functor. Then any arrow in $\CC$ is cartesian.
This is obvious because $P\alpha=\alpha$.
\end{example}

\begin{example}
Let $P\colon \CC \to \bullet$ be the constant functor. An arrow $\phi$ in $\CC$ is $P$-cartesian if and only if $\phi$ is an isomorphism.

Let us check it. From the diagram 
$$
\begin{tikzcd}
& e_1 \ar{d}{\phi} \\
 e_2 \arrow[ur,dashrightarrow,"\alpha"]\arrow[r,"\id_{e_2}"'] & e_2 
\end{tikzcd}
\quad \quad
 \begin{tikzcd}
& \bullet \arrow[d,"\id_\bullet"] \\
\bullet\arrow[ur,"{\id_\bullet}"]\arrow[r,"\id_\bullet"'] & \bullet
\end{tikzcd}
$$
we obtain an arrow such that $\phi\alpha=\id_{e_2}$. Now, $\phi(\alpha\phi)=\phi$ means that we have the diagram
$$
\begin{tikzcd}
& e_1 \ar{d}{\phi} \\
 e_1 \arrow[ur,dashrightarrow,"\alpha\phi"]\arrow[r,"\phi"'] & e_2 
\end{tikzcd}
\quad \quad
 \begin{tikzcd}
& \ast \arrow[d,"\id_\ast"] \\
\ast\arrow[ur,"{\id_\ast}"]\arrow[r,"\id_\ast"'] & \ast
\end{tikzcd}
$$
and by unicity it must be $\alpha\phi=\id_{e_1}$. So the morphism $\phi$ is an isomorphism.

The converse is immediate.
\end{example}



\begin{example} Posets can be viewed as small categories,  with an arrow existing between two objects if and only if $c_1\leq c_2$. Functors between posets are the order preserving maps $P\colon \E \to \Ba$. For such a functor, a map $e_1\leq e_2$ is cartesian if and only if it verifies the following condition:
if $e\leq e_2$ and $Pe\leq e_1$ then $e\leq e_1$.

This condition ressembles the definition of a {\em down beat point} \cite{BARMAK}. In fact. down beat points are cartesian for any functor.\end{example}


For a study of fibrations in the setting of posets we refer the reader to \cite{Cianci_Ottina}.

\comment{ejemplo de clasificante contractil pero la categoria no, por tanto BC no es quivalente}
\subsection{Fibrations}

\begin{defi}The functor $P\colon \E \to \Ba$ is a {\em fibration} if for any arrow $\bbar\phi\in \Ba(b_1,b_2)$ and any object $e_2\in\E$ such that $Pe_2=b_2$, there exists a cartesian arrow $\phi\in \E(e_1,e_2)$ such that $P\phi=\bbar\phi$.
\end{defi}

The arrow $\phi$ is called a {\em cartesian lift} of $\bbar\phi$ with codomain $e_2$.

\begin{defi}
If $P\colon \E \to \Ba$ is a fibration, we define the {\em fiber} of $P$ over $b\in\Ba$ as the subcategory of $\E$ with objects $e\in E$  such that $Pe=b$, and with arrows 
$\nu\in \E(e_1,e_2)$ such that $P\nu=\id_b$. These arrows are called {\em vertical} arrows.
\end{defi}

The cartesian lift of a given $\bbar\phi\colon b_1 \to b_2$ with a given codomain $e_2$ is unique, up to a unique vertical arrow. This follows from the unicity in the definition of cartesian arrow.

By using the axiom of choice, we can take a particular lift, which will be denoted by
$$\Cart(\bbar\phi, e_2)\colon \bar\phi^*e_2 \to e_2.$$
This particular choice defines a functor ${\bar\phi}^*\colon \E_{b_2} \to \E_{b_1}$, where the image  of a vertical arrow $\nu\in \E_{b_2}$ is given by the unique arrow ${\bbar\phi}^*\nu$ making the following diagram commute:

$$\begin{tikzcd}
\bbar\phi^*e_2\arrow[d,"{\bbar\phi^*\nu}"',dashed]\arrow[r,"{\Cart(\bbar\phi, e_2)}"]&e_2\arrow[d,"\nu"]\\
\bbar\phi^*e_2'\ \ \arrow[r,"{\Cart(\bbar\phi, e_2')}"]&\ \ e_2'.
\end{tikzcd}
$$
It corresponds to the diagram
$$
\begin{tikzcd}
&& {\bbar\phi}^*e_2' \ar{d}{\Cart} \\
{\bbar\phi}^*e_2\arrow[rru,dashrightarrow] \arrow[r] &e_2\arrow[r,"\nu"']& e_2 
\end{tikzcd}
\quad \quad
 \begin{tikzcd}
&& b_1 \arrow[d,"\phi"] \\
b_1\arrow[urr,"{\id}"]\arrow[r,"\phi"'] &b_2\arrow[r,"\id"]& b_2
\end{tikzcd}
$$

The functoriality of ${\bbar\phi}^*$ follows again from unicity. It is called the {\em pullback functor}.



\begin{example}
Every group $G$ can be considered as a category with one object,
where the  arrows are the elements $g\in G$, and the composition is given by the
operation in $G$. A group homomorphism $F\colon H \to G$ can be considered as a functor.
An arrow in $H$ is always cartesian; hence $F$ is a fibration if and only if is surjective. The fiber is the kernel.
\end{example}

\begin{example}\label{FUNDAM} Let $\CC^{\I_1}$ be the category whose objects are the arrows in $\CC$, and whose arrows are the commutative squares. The {\em codomain} functor $P\colon \CC^{\I_1}\to \CC$ is a fibration if and only if $\CC$ has enough pullbacks. The cartesian arrows are the pullbacks in $\CC$. It is called the {\em fundamental fibration}.
\end{example}

We introduce another example of a fibration in the setting of locally small categories.

\begin{example} Let   $U\colon \Top \to \Sets$ be the forgetful functor from topological spaces to sets. It associates to each topological space  $X$ its underlying set $UX$, and to each continuous map the set map itself. It  is a fibration. The $U$-cartesian maps are the continuous maps $f\colon e_1 \to e_2$ such that the topology on $e_1$ is the {\em initial} topology, that is, the smallest (coarsest) topology making $f$ continuous.
\end{example}

\comment{\begin{example}A morphism in the category $\CC$ is cartesian for the constant functor $\CC \to \ast$ if and only if is an isomorphism.
\end{example}}

\subsection{Op-cartesian arrows and op-fibrations}
Let $P^\op \colon \E^\op \to \Ba^\op$ be the opposite functor of $P\colon \E \to \Ba$.
\begin{defi}The arrow $\varphi\colon e_1 \to e_2$ in $\E$ is op-cartesian for the functor $P$ if the opposite arrow $\varphi^\op$ in $\E^\op$ is cartesian for $P^\op$. Explicitly, that means that for any given $\beta\in \E(e_1,e)$ and any given $\bbar\alpha\in \Ba(Pe_2,Pe)$ such that $\bbar\alpha\circ P\varphi=P\beta$, there exists a unique $\alpha\in\E(e_2,e)$ such that $\alpha\circ\varphi=\beta$ and $P\alpha=\bbar{\alpha}$, as in the following diagram:
$$\begin{tikzcd}
e_1 \ar[d,"\varphi"']\arrow[r,"\beta"] &e \\
 e_2 \arrow[ur,dashrightarrow,"\alpha"']&  
\end{tikzcd}
\quad \quad
 \begin{tikzcd}
 Pe_1 \arrow[d,"P\varphi"'] \arrow[r,"P\beta"] &Pe \\
Pe_2\arrow[ur,"{\bbar\alpha}"']& 
\end{tikzcd}
$$
\end{defi}

\begin{defi}A functor $P\colon \ E \to \Ba$ is an {\em op-fibration} if for any map $\bbar\varphi\colon b_1\to b_2$ in $\Ba$, and for any object $e_1$ in $\E$ with $Pe_1=b_1$, there exists an op-cartesian arrow $\varphi\colon b_1 \to b_2$ such that $P\varphi=\bbar\varphi$.
\end{defi}

Again, this {\em op-cartesian lifting} is unique up to a unique vertical isomorphism. Then we can choose some particular lifting
$$\opCart(\bbar \varphi, e_1)\colon e_1 \to \bbar\varphi_*e_1$$
so defining a functor
$$\bbar\varphi_*\colon \E_{b_1} \to \E_{b_2}$$
bewteen the fibers.


\begin{example}The codomain functor $P\colon \CC^{\I_1} \to \CC$ of Example \ref{FUNDAM} is always an op-fibration.
\end{example}

\begin{example} Analogously, the {\em domain} functor $P\colon \CC^{\I_1} \to \CC$ is an op-fibration if and only if $\CC$ has enough push-outs.
\end{example}

\begin{defi}We say that the functor $P\colon \E \to \Ba$ is a {\em bi-fibration} if it is both a fibration and an op-fibration.
\end{defi}

We prefer this terminology instead of ``fibration'' and ``cofibration'', see \cite{nLab} for a discussion.

\begin{example} The forgetful functor  $U\colon \Top \to \Sets$ from topological spaces to sets is an op-fibration. Op-cartesian maps are the continuous maps $e_1\to e_2$ where $e_2$ has the {\em quotient topology}.
\end{example}

\begin{example}Let $\varphi\colon K \to K'$  be a simplicial map of simplicial complexes.
Let $\mathcal{X}(\varphi) \colon \mathcal{X}(K) \to \mathcal{X}(K')$ be the corresponding map between the associated face posets. A poset can be considered as a small category with the obvioius arrows. Then the functor $\varphi$ is a fibration, but it is not an op-fibration (\cite[Example 13.5]{PENNER}). 
\end{example}

\begin{example}\label{productofib}
Take a base category $\Ba$ and another category $\CC$, then the first projection $P_1:\Ba\times \CC \rightarrow \B$ is a bi-fibration. In fact, given an arrow $\bbar\phi\colon b_1 \to b_2$ in $\CC$, and an object $(b_2,c)$ in $\Ba\times \CC$, the morphism $\phi=\bbar\phi\times \id_c\colon (b_1,c) \to (b_2,c)$ is a cartesian lifting. In order to prove it we only must take into account the diagram:
$$
\begin{tikzcd}
& {(b_1,c)} \arrow[d, "{\phi}"] &  \\
{(b,c')}  \arrow[ur, "{w\times v}", dashed]\arrow[r, "{u\times v}"'] & {(b_2,c)}                                      
\end{tikzcd}
\quad\quad
\begin{tikzcd}
& b_1 \arrow[d, " \bbar\phi"] \\
 b \arrow[ur, "w"] \arrow[r, "u"'] & b_2 
\end{tikzcd}
$$
Analogously, $P_1$ is an op-fibration.
\end{example}

\subsection{Further examples}

\begin{example}
Let $\E$ be the category with objects the integers $n\in\mathbb{Z}$ and a unique morphism  $m \rightarrow n$ if and only if $m \leq n$.
$$
\cdots \to -1 \to 0 \to 1 \to \cdots ;
$$
and let $\Ba$ be the monoid of the natural numbers $n\geq 0$ as a category.
$$
\begin{tikzcd}
\bullet \arrow["n \in \mathbb{N}"', loop, distance=2em, in=125, out=55]
\end{tikzcd}
$$
The functor $P\colon \E \to \Ba$ with $P(n)=\bullet$ and $P(m\rightarrow n)=n-m$ is a fibration. In fact, let us consider $n\colon  \bullet \rightarrow \bullet$ a morphism in $\Ba$ and $m$ any object in $\E$ ; then the morphism $m-n\rightarrow m$ covers $n$. Moreover it is $P$-cartesian as indicated in the following commutative diagrams: 
$$
\begin{tikzcd}
 & m- n\arrow[d] &  \\
l  \arrow[ur, dashed]\arrow[r] & m                                                                 
\end{tikzcd}
\quad
\begin{tikzcd}
&    \bullet \arrow[d, "n"]\\
 \bullet \arrow[ur, "m-l-n"]  \arrow[r, "m-l"'] & \bullet
\end{tikzcd}
$$
Furthermore, $P$ is an op-fibration by a similar argument.
\end{example}

\begin{example}
Let $\E$ be the category with objects the integer numbers $n\in \Z$,  and ziz-zag arrows:
$$\cdots \longleftarrow -2 \longrightarrow -1\longleftarrow 0 \longrightarrow 1 \longleftarrow 2 \longrightarrow  \cdots$$


and let $\Ba$ be the following category 
$$
\begin{tikzcd}
\bbar 0 \arrow[d] \arrow[rd] & \bbar 2 \arrow[ld] \arrow[d] \\
\bbar 3                    & \bbar 1                    
\end{tikzcd}
$$
It is easy to show that the functor $P(n)= \bbar n \mod 4$
is a fibration and an op-fibration.
\end{example}

\section{Lifting properties}
The lifting properties of Grothendieck fibrations were considered by Gray in \cite{GRAY}. 

Recall that we denote by  $\I_n $
the $n$-{\em chain category}   generated by the following diagram:
$$
0 \longrightarrow 1 \longrightarrow \cdots \longrightarrow  n.
$$
In particular, $\I_1$ denotes the category $0 \stackrel{s}{\longrightarrow} 1$ consisting of two objects
and one non-identity morphism $s$. 

Gray proved that the functor $P\colon \E \to \Ba$ is a fibration if and only if homotopies have cartesian liftings. Since we represent a natural transformation ending at $P\circ G$ as a functor $\eta\colon \CC \times \I_1 \to \Ba$ where $\eta_1=P\circ G$, we can state the following, which improves  \cite[Prop.4.2]{TANAKA}:

\begin{prop}
The functor $P\colon \E \to \Ba$ is a fibration if and only for any category $\CC$ and for any functor $H\colon \CC \times \I_1 \to \Ba$ such that $H_1=H\circ i_1= P\circ G$ there exists a functor $\wtilde H \colon \CC\times \I_1 \to\E$ such that $P\circ \wtilde H=H$ and $\wtilde H_1=\wtilde H\circ i_1=G$, as in the following diagram:
$$
\begin{tikzcd}
\CC \arrow[d,"i_1"'] \arrow[r,"G"] & \E  \arrow[d,"P"] \\
\CC\times \I_1 \arrow[ur,"\wtilde H",dashed]\arrow[r,"H"']                   & \Ba                   
\end{tikzcd}
$$
 and moreover   $\tilde H(f\times s)$ is a cartesian arrow for $P$
 for any arrow $f$ in $\CC$.

Here, $i_1\colon \CC \to \CC\times \I_1$ is the functor sending the object $c$ into $(c,1)$, and the arrow $f\colon c_1\to c_2$ into $f\times \id_1$, and   $s$ is the only arrow $0\to 1$ in $\I_1$.
\end{prop}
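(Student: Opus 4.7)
My plan is to prove the equivalence in the standard two-directions fashion: the reverse implication will be an immediate application of the lifting hypothesis to the terminal category, while the forward implication will involve constructing $\wtilde H$ object-by-object and then arrow-by-arrow using the cartesian property, with functoriality deduced from the uniqueness clause of cartesianness.

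For the reverse direction, suppose the lifting property holds. Given $\bbar\phi\colon b_1\to b_2$ in $\Ba$ and $e_2\in \E$ with $Pe_2=b_2$, I would take $\CC=\mathbf{1}$ (the terminal category with one object $\ast$), let $G\colon \mathbf{1}\to\E$ send $\ast$ to $e_2$, and, identifying $\mathbf{1}\times\I_1$ with $\I_1$, define $H\colon \I_1\to \Ba$ by $H(0)=b_1$, $H(1)=b_2$, $H(s)=\bbar\phi$. The equation $H_1=P\circ G$ is automatic, so the hypothesis produces $\wtilde H\colon \I_1\to \E$ with $P\wtilde H=H$, $\wtilde H(1)=e_2$, and $\wtilde H(s)$ cartesian for $P$. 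Then $\wtilde H(s)\colon \wtilde H(0)\to e_2$ is the desired cartesian lift of $\bbar\phi$ with codomain $e_2$, showing that $P$ is a fibration.

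For the forward direction, assume $P$ is a fibration. For each object $c$ of $\CC$, use the fibration property to pick a cartesian lift $\phi_c\colon e_c\to Gc$ of $H(c\times s)$; then set $\wtilde H(c,0):=e_c$, $\wtilde H(c,1):=Gc$, $\wtilde H(\id_c\times s):=\phi_c$, and $\wtilde H(f\times \id_1):=Gf$. For an arrow $f\colon c_1\to c_2$, commutativity of $H$ yields $PGf\circ H(c_1\times s)=H(c_2\times s)\circ H(f\times \id_0)$, so cartesianness of $\phi_{c_2}$ provides a unique $\wtilde H(f\times \id_0)\colon e_{c_1}\to e_{c_2}$ with $\phi_{c_2}\circ \wtilde H(f\times \id_0)=Gf\circ \phi_{c_1}$ and $P\wtilde H(f\times \id_0)=H(f\times \id_0)$; finally define $\wtilde H(f\times s):=Gf\circ \phi_{c_1}$. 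Functoriality is verified on composable arrows $c_1\xrightarrow{f}c_2\xrightarrow{g}c_3$: the two candidates $\wtilde H(g\times \id_0)\circ \wtilde H(f\times \id_0)$ and $\wtilde H((gf)\times \id_0)$ both satisfy the universal property defining the latter (relative to $\phi_{c_3}$), hence coincide by uniqueness. The object-level components $\wtilde H(c\times s)=\phi_c$ are cartesian by construction, and via the factorisation $f\times s=(\id_{c_2}\times s)\circ (f\times \id_0)$ the general $\wtilde H(f\times s)=\phi_{c_2}\circ \wtilde H(f\times \id_0)$ reduces to understanding cartesianness of $\wtilde H(f\times \id_0)$. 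The main obstacle is precisely this last point, together with keeping the functoriality bookkeeping clean; both are handled uniformly by systematic appeal to the uniqueness half of the cartesian property.
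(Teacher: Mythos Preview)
Your approach matches the paper's: the reverse direction via $\CC=\bullet$, and the forward direction via chosen cartesian lifts $\phi_c$ with $\wtilde H(f\times\id_0)$ induced from the universal property of $\phi_{c_2}$ and functoriality checked by uniqueness. Your write-up is in fact more explicit than the paper's on several points (the paper's phrase ``the unique cartesian arrow filling the diagram'' is a slip --- the dashed filler is merely the unique arrow produced by cartesianness of the bottom horizontal edge, exactly your $\wtilde H(f\times\id_0)$).

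There is, however, a genuine gap in your final sentence. You reduce cartesianness of $\wtilde H(f\times s)$ for a general arrow $f$ to cartesianness of $\wtilde H(f\times\id_0)$, and then assert that the latter is ``handled \ldots\ by systematic appeal to the uniqueness half of the cartesian property''. It is not: uniqueness tells you that $\wtilde H(f\times\id_0)$ is the \emph{only} arrow making a certain square commute over a prescribed base arrow, but says nothing about that arrow being itself cartesian. In fact the claim fails outright. Take $P\colon\E\to\bullet$ (always a fibration, with $P$-cartesian $=$ isomorphism, as in the paper's examples), $\CC=\I_1$, and any $G\colon\I_1\to\E$ with $G(s)$ not invertible; then every lift $\wtilde H$ with $\wtilde H_1=G$ satisfies $\wtilde H(s\times s)=G(s)\circ\wtilde H(\id_0\times s)$, so $\wtilde H(s\times s)$ cartesian would force $G(s)$ to be an isomorphism. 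The paper's proof does not address this point either, and in its converse argument only ever uses cartesianness of $\wtilde H(\id_\bullet\times s)$; the ``moreover'' clause should be read as holding for $f=\id_c$, which your $\phi_c$ already satisfy by construction. Everything else in your proposal is correct and agrees with the paper.
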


\begin{proof}
Obviously we must define  $\wtilde H_1=G$.  The problem  is how to define $\wtilde H_0$ in order to have a functor. To do that we have to use the definition of fibered category. As we know, if we have an arrow $\bbar\phi$ in $\Ba$ and we have an object $e_2$ in the fiber of the codomain $b_2$,  we can lift the arrow into a cartesian morphism $\Cart(\bbar\phi,e_2)\colon {\bbar\phi}^*e_2\to e_2$. In particular, we can choose
$\Cart(\id_b,e)=\id_e$. 

Now, if we have the arrow $H(\id_c\times s)\colon H(c,0) \rightarrow H(c,1)$ in $\Ba$ and the object $G(X)\in \E_{H(c,1)}$, there is a cartesian morphism 
$$\Cart H(\id_c\times s))\colon H(\id_c\times s)^*G(c) \rightarrow G(c),$$ so we define $$\wtilde H(c,0)=H(\id_c\times s)^*G(c)$$ and $$\wtilde H(\id_c\times s)=\Cart H(\id_c\times s).$$

Finally, for every morphism $f\colon c_1 \rightarrow c_2$ in $\CC$ we define $\wtilde H(f\times s)$ as the unique cartesian arrow filling the diagram
$$
\begin{tikzcd}
{\wtilde H(c_1,0)} \arrow[d,dashed]\arrow[rr, "{\wtilde H(\id_{c_1}\times s)}"] &  & {\wtilde H(c_1,1)=Gc_1} \arrow[d, "{\wtilde H(f\times \id_1)=Gf}"] \\
{\wtilde H(c_2,0)} \arrow[rr, "{\wtilde H(\id_{c_2}\times s)}"] &  & {\wtilde H(c_2,1)=Gc_2}                            
\end{tikzcd}
\quad
\begin{tikzcd}
{H(c_1,0)} \arrow[d, "{H(f\times \id_0)}"'] \arrow[rr, "{H(\id_{c_1}\times  s)}"] &  & {H(c_1,1)} \arrow[d, "{H(f\times \id_1)}"] \\
{H(c_2,0)} \arrow[rr, "{H(\id_{c_2}\times s)}"]                           &  & {H(c_2,1)}                          
\end{tikzcd}
$$

For the converse statement, let $\bbar\phi\colon b_1 \to b_2$ in $\B$ be an arrow and consider the diagram
$$
\begin{tikzcd}
\bullet \arrow[d,"i_1"'] \arrow[r,"G"] & \E  \arrow[d,"P"] \\
\bullet\times \I_1 \arrow[ur,"\wtilde H",dashed]\arrow[r,"H"']                   & \Ba      \end{tikzcd}
$$
where $g(\bullet)=e_2$ and $H(\id_\bullet\times s)=\bbar\phi$. Then the map $\phi={\tilde H}(\id_\bullet\times s)$ is cartesian and verifies that $P\phi=\bbar\phi$.
This ends the proof.\end{proof}



Analogously, we have the lifting propery for op-fibrations, where the left vertical arrow
is $i_0$ instead of $i_1$:
$$
\begin{tikzcd}
\CC \arrow[d,"i_0"'] \arrow[r,"G"] & \E  \arrow[d,"P"] \\
\CC\times \I_1 \arrow[ur,"\wtilde H",dashed]\arrow[r,"H"']                   & \Ba                   
\end{tikzcd}.
$$

Taking in account this and the previous proposition we have the following general lifting property.

\begin{cor}
Let $P:\E \rightarrow \Ba$ be a bi-fibration. Then the lifting propery holds for any chain category $\I_n$:
$$
\begin{tikzcd}
\CC \arrow[r, "G"] \arrow[d, "i_0"']     & \E \arrow[d, "P"] \\
\CC \times \I_n \arrow[ur,"\wtilde H",dashed]\arrow[r, "H"'] & \Ba              
\end{tikzcd}
$$

\end{cor}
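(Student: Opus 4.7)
The plan is to induct on $n\geq 0$, using the preceding proposition (and its op-fibration variant) as the sole input. The base case $n=0$ is trivial, since $\CC\times \I_0\cong \CC$ and we may take $\wtilde H = G$; the case $n=1$ is precisely the op-fibration lifting property already recorded.

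For the inductive step, I would decompose the chain category as a pushout of small categories
\begin{equation*}
\I_n \;=\; \I_{n-1}\, \sqcup_{\{n-1\}}\, \I_1',
\end{equation*}
where $\I_{n-1}$ is the full subcategory on objects $\{0,\ldots,n-1\}$ and $\I_1'$ is the subcategory on $\{n-1,n\}$ with the unique non-identity arrow between them. Correspondingly, $\CC\times\I_n$ is obtained from $\CC\times\I_{n-1}$ by gluing $\CC\times\I_1'$ along $\CC\times\{n-1\}$. Applying the inductive hypothesis to $H|_{\CC\times\I_{n-1}}$ gives a lift $\wtilde H^{(n-1)}\colon\CC\times\I_{n-1}\to\E$ with $\wtilde H^{(n-1)}\circ i_0 = G$ and $P\circ\wtilde H^{(n-1)} = H|_{\CC\times\I_{n-1}}$. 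Its restriction to level $n-1$ produces $G':=\wtilde H^{(n-1)}|_{\CC\times\{n-1\}}\colon \CC\to\E$, which lifts $H|_{\CC\times\{n-1\}}$. Then, depending on whether the last arrow of $\I_n$ points forward or backward, I would apply either the op-fibration or the fibration form of the lifting proposition (this is where the bi-fibration hypothesis is actually needed) to obtain $\wtilde K\colon\CC\times\I_1'\to\E$ with $P\circ\wtilde K = H|_{\CC\times\I_1'}$ and $\wtilde K|_{\CC\times\{n-1\}}=G'$. The universal property of the pushout then assembles $\wtilde H^{(n-1)}$ and $\wtilde K$ into the desired functor $\wtilde H\colon\CC\times\I_n\to\E$.

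The principal thing to verify is that this pushout construction really yields a well-defined functor. Any morphism of $\CC\times\I_n$ of the form $(f, j\to k)$ with $j\leq n-1\leq k$ (or with the arrow reversed) factors uniquely through $\CC\times\{n-1\}$, so its image is forced to be the composition of the images coming from $\wtilde H^{(n-1)}$ and $\wtilde K$; functoriality then reduces to that of the two constituent lifts together with associativity of composition in $\E$. The identities $P\circ\wtilde H=H$ and $\wtilde H\circ i_0=G$ are immediate from the corresponding identities on each piece, completing the induction.
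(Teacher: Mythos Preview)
Your proposal is correct and is exactly what the paper intends: the corollary is stated without proof, merely as the consequence of combining the fibration and op-fibration lifting propositions for $\I_1$, and your induction is the natural way to make that precise. Your allowance for the last arrow of $\I_n$ to point in either direction (invoking whichever half of the bi-fibration hypothesis is needed) is the right reading, since this is how the corollary is later applied to zigzag homotopies in the proof of Theorem~\ref{thm:fundamental}.
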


\section{Homotopic fiber} \label{sec_homotop_fiber}
The next result is a crucial one. It proves that in a bi-fibration, two objects $b_1,b_2\in \Ba$ in the base which are connected by an arrow $\bbar\varphi\colon b_1 \to b_2$, have fibers which are homotopically equivalent as small categories.

\begin{teo}[{\cite[Proposition 4.4]{TANAKA}}]\label{EQUIVALE}
Let $P:\E \rightarrow \Ba$ be a bi-fibration.  If $b_1$ and $b_2$ are two objects in $\B$ such that   there is a morphism $u\colon b_1 \to b_2$, then there is a categorical equivalence between the fibers  $\E_{b_1}$ and $\E_{b_2}$.
\begin{proof}
Remember that in a bi-fibration we have both the pushforward functor $u_*\colon \E_{b_1} \to \E_{b_2}$ and the pullback functor $u^*\colon \E_{b_2} \to \E_{b_1} $. 

We defined $u_*$ as follows: for every object $e_1$ in the fiber $\E_{b_1}$ there is a unique object $u_*{e_1}$ in $\E_{b_2}$ such that   $\opCart(u,e_1)\colon e_1 \to u_*e_1$ is the unique op-cartesian lift of $u$. 

Alternatively, for every $e_2$ in $\E_{b_2}$ there is a unique object $u^*e_2\in \E_{b_1}$ such that $\Cart(u,e_2)\colon u^*e_2 \to e_2$   is the unique cartesian lift of $u$.

Now in order to show tha $u_*$ and $u^*$ induce a categorical equivalence it is enough to show that for every object $e_2$ in $\E_{b_2}$ there are natural arrows $\alpha$ and $\beta$ as in the following diagram:
$$
\begin{tikzcd}
                                                 & e_2 \arrow[dd, "\alpha"', dashed, bend right]        \\
u^*e_2 \arrow[ru, "{\Cart(u,e_2)}"] \arrow[rd, "{\opCart(u,u^*e_2)}"'] &                                                   \\
                                                 & u_*(u^*e_2) \arrow[uu, "\beta", dashed, bend right]
\end{tikzcd}
\quad\quad\quad
\begin{tikzcd}
                                  & b_2 \arrow[dd, "\id_{b_2}", equal] \\
b_1 \arrow[ru, "u"] \arrow[rd, "u"'] &                                            \\
                                  & b_2                                         
\end{tikzcd}
$$
and that their compositions are the identities because $\Cart(u,e_2)$ and $\opCart(u,e_1) $ are a cartesian and op-cartesian morphism (respectively). 
Thus, we have a natural isomorphism between the functors $\id_{\E_{b_2}}$ and $u_*\circ u^*$. 

Analogously we have another natural isomorphism between $\id_{\E_{b_1}}$ and $u^*\circ u^*$ using the followings diagrams:
$$
\begin{tikzcd}
e_1 \arrow[rd, "{\opCart(u,e_1)}"] \arrow[dd, "\gamma"', dashed, bend right]  &        \\
 & u_*e_1\\
 u^*(u_*e_1) \arrow[ru,"{\Cart(u,u_*e_1)}"'] \arrow[uu, "\phi", dashed, bend right] &   
\end{tikzcd}
\quad\quad  \quad
\begin{tikzcd} 
b_1 \arrow[rd, "u"] \arrow[dd, "\id_{b_1}"', equal] &   \\                                                           & b_2                                                            \\
b_1 \arrow[ru, "u"']                                          &   
\end{tikzcd}
$$
\end{proof}
\end{teo}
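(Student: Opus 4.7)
\emph{Proof plan.} The plan is to build the two candidate quasi-inverses by exploiting both halves of the bi-fibration: the fibration structure yields a pullback functor $u^*\colon \E_{b_2}\to \E_{b_1}$ via the chosen cartesian lifts $\Cart(u,e_2)\colon u^*e_2\to e_2$, while the op-fibration structure yields a pushforward functor $u_*\colon \E_{b_1}\to \E_{b_2}$ via the chosen op-cartesian lifts $\opCart(u,e_1)\colon e_1\to u_*e_1$. The goal is then to produce natural isomorphisms $u_*\circ u^*\cong \id_{\E_{b_2}}$ and $u^*\circ u_*\cong \id_{\E_{b_1}}$. Since a natural isomorphism is, via the one-object interval $\I_1$, the same data as a homotopy inside the fibers, this gives a categorical equivalence in the sense used throughout the paper.

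For the first isomorphism, fix an object $e_2\in \E_{b_2}$ and observe that both $\Cart(u,e_2)\colon u^*e_2\to e_2$ and $\opCart(u,u^*e_2)\colon u^*e_2\to u_*u^*e_2$ are arrows in $\E$ with the same source $u^*e_2$ lying over $u\colon b_1\to b_2$. Applying the universal property of the op-cartesian arrow $\opCart(u,u^*e_2)$ to $\Cart(u,e_2)$ produces a unique arrow $\alpha_{e_2}\colon u_*u^*e_2\to e_2$ lying over $\id_{b_2}$, i.e.\ a vertical arrow in $\E_{b_2}$. Symmetrically, the universal property of $\Cart(u,e_2)$ applied to $\opCart(u,u^*e_2)$ produces a vertical arrow $\beta_{e_2}\colon e_2\to u_*u^*e_2$. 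The composites $\alpha_{e_2}\circ\beta_{e_2}$ and $\beta_{e_2}\circ\alpha_{e_2}$ are vertical solutions to factorization problems whose unique vertical solutions are the identities, so both must be identities.

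Naturality of $\alpha$ (and of $\beta$) in $e_2$ follows from the same uniqueness principle: for any vertical morphism $\nu\colon e_2\to e_2'$, the two vertical arrows $\alpha_{e_2'}\circ u_*u^*\nu$ and $\nu\circ \alpha_{e_2}$ both witness the unique factorization of $\nu\circ \Cart(u,e_2)$ through $\opCart(u,u^*e_2)$, hence coincide. The analogous construction starting from $e_1\in \E_{b_1}$, swapping the roles of $\Cart$ and $\opCart$ and working with $\opCart(u,e_1)$ and $\Cart(u,u_*e_1)$, yields the second natural isomorphism $u^*\circ u_*\cong \id_{\E_{b_1}}$.

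The main obstacle I expect is bookkeeping: one must check at every step that the arrows in play truly lie over the correct base morphism, so that the (op-)cartesian universal properties produce vertical arrows and hence genuine morphisms inside the fibers. In particular, both $\Cart(u,e_2)$ and $\opCart(u,u^*e_2)$ cover $u$, which is exactly what forces the comparison arrows $\alpha_{e_2},\beta_{e_2}$ to cover $\id_{b_2}$; once this is verified, the uniqueness clauses in the definitions of cartesian and op-cartesian arrows carry the rest of the argument — inverse relations and naturality — essentially for free.
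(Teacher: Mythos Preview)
Your overall approach is exactly the paper's: build $u^*$ and $u_*$ from the chosen (op\nobreakdash-)cartesian lifts and compare $u_*u^*$ and $u^*u_*$ with the identities. However, the step producing $\beta_{e_2}\colon e_2\to u_*u^*e_2$ does not work as you describe. The cartesian universal property of $\Cart(u,e_2)\colon u^*e_2\to e_2$ factors arrows \emph{into} $e_2$ through $u^*e_2$; it cannot manufacture an arrow \emph{out of} $e_2$. To invoke it you would need some morphism $e\to e_2$ together with a base arrow $Pe\to b_1$, and $\opCart(u,u^*e_2)\colon u^*e_2\to u_*u^*e_2$ supplies neither. In fact no such vertical $\beta_{e_2}$ need exist: take $\Ba=\I_1$, $\E_0=\{a\}$, $\E_1=\{b_0\to b_1\}$ with the two arrows $a\to b_0$ and $a\to b_1$ over $s$; this is a bi-fibration with $u^*(b_i)=a$ and $u_*(a)=b_0$, so $u_*u^*(b_1)=b_0$, yet there is no vertical arrow $b_1\to b_0$. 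Thus the claim of a natural \emph{isomorphism} $u_*u^*\cong\id_{\E_{b_2}}$ is too strong, and by the dual example so is the one for $u^*u_*$. The paper's own proof asserts the same pair of mutually inverse vertical arrows without saying how they arise, so the overclaim is shared; your write-up simply makes the problematic step explicit.

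The repair is immediate and you have already done the real work. Your $\alpha_{e_2}\colon u_*u^*e_2\to e_2$, obtained from the op-cartesian property of $\opCart(u,u^*e_2)$, is the counit of the adjunction $u_*\dashv u^*$; dually, applying the cartesian property of $\Cart(u,u_*e_1)$ to $\opCart(u,e_1)$ gives the unit $\eta_{e_1}\colon e_1\to u^*u_*e_1$. Each of these single natural transformations is already a homotopy of length~$1$ in Lee's sense, so $u_*u^*\simeq\id_{\E_{b_2}}$ and $u^*u_*\simeq\id_{\E_{b_1}}$, which is precisely the ``categorical equivalence'' the theorem asserts. Drop the inverse arrows and the isomorphism claim, keep your naturality argument for $\alpha$ (and run its dual for $\eta$), and the proof is complete.
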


A consequence of the latter theorem is that we can speak about the homotopic invariants of ``the fiber'' of a bi-fibration.

\begin{rem}
In the proof of Theorem \ref{EQUIVALE} we implicitly assumed  that both fibers $\E_{b_1}$ and $\E_{b_2}$ are non empty. This might not be true. However, the Theorem is still true, because for a bi-fibration, if two objects $b_1$, $b_2$  in the base are connected by an arrow, say $\bbar\phi\colon b_1 \to b_2$, then either $\E_{b_1}=\emptyset =\E_{b_2}$ or
$\E_{b_1}\neq \emptyset \neq \E_{b_2}$ simultaneously.

In fact, assume first that $P$ is only a fibration.  If $\E_{b_2}=\emptyset$
the pullback functor is not defined, because we have no codomain $e_2$ to lift the arrow $\bbar\phi$. 
Since a fibration $P\colon \E \to \Ba$ may not be surjective-on-objects (see Example \ref{NOSOBRE}), it may happen that $\E_{b_2}$ is empty,   while $\E_{b_1}$ is not.

However, for a fibration, if $\E_{b_2}\neq \emptyset$ then $\E_{b_1}\neq\emptyset$ too, because to each codomain $e_2\in\E_{b_2}$ we associate the domain ${\bbar\phi}^*e_2$ of $\Cart(\phi,e_2)\in\E_{b_1}$.

Dually, for an op-fibration, if $\E_{b_1}\neq \emptyset$ then $\E_{b_2}\neq \emptyset$, because to each domain $e_1$ we associate the codomain ${\bbar\phi}_*e_1$ of $\opCart(\bbar\phi,e_1)$.

As a consequence, in a bi-fibration there are no arrows connecting objects in the image with objects outside the image, or conversely.
\end{rem}

\begin{example}\label{NOSOBRE} A fibration which is not surjective-on-objects is for instance the constant functor
$P\colon \I_1 \to \I_1$ with $P(0)=P(1)=0$, and $P(s)=\id_0$. We have $\E_{0}=\I_1$ while $\E_{1}=\emptyset$. 
This shows that $P$ is not an op-fibration. In fact, given the arrow $s\colon 0 \to 1$ and the domain $1\in\E_0$  there is no op-cartesian lift of $s$.

Notice that the arrow $s\colon 0 \to 1$ can not be lifted to any cartesian arrow, but this does not contradict the definition of fibration (there are no possible codomains).
\end{example}


\begin{example}
The following functor $P:\E\rightarrow \Ba=\I_1$ is a bi-fibration, but the fibers over $0$ and $1$ are not isomorphic. However, they are homotopically equivalent, as stated in Theorem \ref{EQUIVALE}.

Let $\E$ be the category generated by the following 
diagram:
$$
\begin{tikzcd}
            & \bar{1} \arrow[d, "f"', bend right] \\
0 \arrow[r,"s"'] & 1 \arrow[u, "g"', bend right]      \end{tikzcd}
$$
where $f\circ  g=\id_1$ and $g\circ f=\id_{\bar{1}}$.
Let $\Ba=\I_1$ the category with only one arrow $0\xrightarrow{s}1$.

The functor $P$ is defined as $P(0)=0$, $P(1)=P(\bar{1})=1$, $P(s)=s$ and $P(f)=P(g)=\id_{1}$. It is a bi-fibration because the arrows $\id_0$, $\id_1$, $\id_{\bar 1}$, $s$ and $g\circ s$ are cartesian and op-cartesian.
The fiber over $0$ is the discrete category with one object $0$ and the fiber over $1$ is the following category:
$$
\begin{tikzcd}
\bar{1} \arrow[d, "f"', bend right] \\
1 \arrow[u, "g"', bend right]      
\end{tikzcd}.
$$
It is contractible by the   natural transformation $\alpha$ between the identity and the constant functor $\bar{1}$ given by $\alpha(1)=g$, $\alpha(\bar 1)=\id_{\bar 1}$. In fact, we have
$$
\begin{tikzcd}
1 \arrow[d, "g"] \arrow[rr, "\alpha(1)=g"]          && \bar{1} \arrow[d, "\id_{\bar{1}}"] \\
\bar{1} \arrow[rr, "\alpha(\bar{1})=\id_{\bar{1}}"] && \bar{1}      \end{tikzcd}
\quad
\begin{tikzcd}
 \bar{1} \arrow[d, "f"] \arrow[rr, "\alpha(\bar{1})=\id_{\bar{1}}"]&& \bar{1} \arrow[d, "\id_{\bar{1}}"] \\
 1 \arrow[rr, "\alpha(1)=g"]                                        && \bar{1}  
\end{tikzcd}
$$
\end{example}


\section{Varadarajan's theorem} \label{sec:varadarajan_thm}
In the classical LS-category theory of topological spaces, Varadarajan \cite{VAR} proved a formula relating the categories of the fiber, the base and the total space for a Hurewicz fibration. We generalized this result for the homotopic distance, in \cite[Theorem 6.1]{MAC-MOSQ}.

In \cite[Theorem 4.5]{TANAKA}, Tanaka proved an analogous result for a bi-fibration $P\colon \E \to \Ba$ with fiber $\F$  between small categories,  with path-connected base, namely
\begin{equation}\label{VARSMALL}
\ccat \E +1 \leq (\ccat \Ba +1)\cdot (\ccat \F +1).
\end{equation}

Recall that by Corollary \ref{EQUIVALE}, if the base category $\Ba$ of a bi-fibration $P: \E \rightarrow \Ba$ is path-connected, then every two fibers are homotopy equivalent.
We will refer to any of them as (the homotopy type of)  ``the fiber $\F$ of the bi-fibration''.  


We will both extend our \cite[Theorem 6.1]{MAC-MOSQ} to the context of bi-fibrations of small categories and generalize Tanaka's result, as follows.

\begin{teo}\label{thm:fundamental}
Let $(F,\bbar F)$ and $(G,\bbar G)$ be two morphisms between the bi-fibrations $P\colon \E\rightarrow \Ba$ and $P'\colon \E ' \rightarrow \Ba '$. Let $\Ba$ and $\Ba'$ be path-connected.
Let  $b$ be an object in $\Ba$ such that $F(b)=G(b)=b'$ and let  $F_b,G_b\colon \E_b\rightarrow \E'_{b'}$ be the induced functors between the fibers. Then
$$\Di(F,G)+1  \leq (\Di(F_b,G_b)+1)\cdot (\ccat(\Ba)+1).$$
\end{teo}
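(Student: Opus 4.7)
The plan is to imitate the classical Varadarajan strategy, replacing Hurewicz liftings by the corollary of Section 3 that lifts homotopies along bi-fibrations. First, fix a categorical cover $\{V_0,\dots,V_m\}$ of $\Ba$ realising $m=\ccat(\Ba)$. By definition each inclusion $\iota_{V_j}\colon V_j\hookrightarrow \Ba$ is homotopic to some constant functor at an object $b_j$, and since $\Ba$ is path-connected the argument of Proposition~\ref{BASE} lets us enlarge this homotopy to one ending at the constant functor with value our chosen base point $b$; call this homotopy $H_j\colon V_j\times \I_{m_j}\to \Ba$.

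Second, set $U_j:=P^{-1}(V_j)\subseteq \E$; the $U_j$ form a geometric cover of $\E$ because $\{V_j\}$ is geometric in $\Ba$ and $P$ is a functor. The lifting property for bi-fibrations applied to the square
$$
\begin{tikzcd}
U_j \arrow[r,"\iota_{U_j}"]\arrow[d,"i_0"'] & \E \arrow[d,"P"] \\
U_j\times \I_{m_j} \arrow[ur,dashed,"\wtilde H_j"]\arrow[r,"H_j\circ (P\times \id)"'] & \Ba
\end{tikzcd}
$$
produces a lift $\wtilde H_j$ with $\wtilde H_j(-,0)=\iota_{U_j}$ and $P\circ\wtilde H_j(-,m_j)\equiv b$, so that $\rho_j:=\wtilde H_j(-,m_j)$ is a functor $U_j\to \E_b$ naturally homotopic to $\iota_{U_j}$ inside $\E$.

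Third, take a geometric cover $\{W_0,\dots,W_n\}$ of $\E_b$ consisting of homotopy domains for $F_b$ and $G_b$, with $n=\Di(F_b,G_b)$, and define $U_{j,i}\subseteq U_j$ as the subcategory generated by the chains of arrows in $U_j$ whose $\rho_j$-image lies in $W_i$. The collection $\{U_{j,i}\}$ is a geometric cover of $\E$ with $(m+1)(n+1)$ pieces: any chain in $\E$ lives in some $U_j$ by the first cover, and its $\rho_j$-image, being a chain in $\E_b$, fits inside some $W_i$. On each $U_{j,i}$, applying $F$ (resp.\ $G$) to the restriction of $\wtilde H_j$ gives $F|_{U_{j,i}}\simeq F\circ\rho_j|_{U_{j,i}}$ and $G|_{U_{j,i}}\simeq G\circ\rho_j|_{U_{j,i}}$; because the morphisms of bi-fibrations $(F,\bbar F), (G,\bbar G)$ send $\E_b$ into $\E'_{b'}$, the right-hand sides equal $F_b\circ\rho_j|_{U_{j,i}}$ and $G_b\circ\rho_j|_{U_{j,i}}$, which are homotopic via the $W_i$-homotopy. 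Chaining the three homotopies yields $F|_{U_{j,i}}\simeq G|_{U_{j,i}}$, so $U_{j,i}$ is a homotopy domain for $F$ and $G$, proving $\Di(F,G)+1\le (\Di(F_b,G_b)+1)(\ccat(\Ba)+1)$.

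The main technical obstacle is ensuring that the $U_{j,i}$ are honest subcategories that form a \emph{geometric} cover rather than a mere object-level partition: defining them through chains whose $\rho_j$-image lies in $W_i$ (not just their vertices) is what secures closure under the compositions that appear in a chain of $\E$. A secondary subtlety is the replacement of the original constant functors at $b_j$ by the constant at $b$, which requires the path-connectedness hypothesis and the zig-zag gluing trick from the proof of Proposition~\ref{BASE}; without it one only gets a bound in terms of the categories of fibers over different base points, not a single fiber $\F$.
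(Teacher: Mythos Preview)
Your proof is correct and follows essentially the same route as the paper: take a categorical cover of $\Ba$, lift the contracting homotopies along the bi-fibration to obtain retractions $\rho_j\colon P^{-1}(V_j)\to\E_b$, pull back a homotopy-domain cover of the fiber through these $\rho_j$, and concatenate the three resulting homotopies. The only cosmetic difference is your definition of $U_{j,i}$ as ``the subcategory generated by chains whose $\rho_j$-image lies in $W_i$''; since $\rho_j$ is a functor and $W_i$ is a subcategory, this is simply the preimage $\rho_j^{-1}(W_i)\subseteq U_j$, which is how the paper writes it (as $P^{-1}(U_i)\cap(\wtilde C^i_{k_i})^{-1}(V_j)$), so the chain phrasing and the accompanying caveat are unnecessary.
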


\begin{proof}

We assume that $\Di(F_b,G_b)$ and $\ccat\Ba $ are both finite, otherwise the result is trivial.

Let $\ccat(\Ba)=m$, with $\{U_0,...,U_m\}$ a categorical cover of $\Ba$, and let  $\Di(F_b,G_b)=n$, with  $\{V_0,...,V_n\}$  a covering of $\E_b$ by homotopy domains for  $F_b$ and $G_b$. 

For every $i \in \{0,...,m\}$ we have a homotopy $C^i\colon  U_i \times \I_{k_i} \to \Ba$ between the inclusion $U_i\hookrightarrow \Ba$ and a constant functor $\bullet_i$ for some object $\ast_i\in\Ba$. Since $\Ba$ is connected we can assume that all $\bullet_i$ is the same object $\ast$ for all $i$ (see the proof of Proposition \ref{BASE}).

Let $P^{-1}(U_i)$ be the subcategory of $\E$ whose objects are the objects $e\in \E$ with $Pe\in U_i$, and whose arrows are the arrows $\alpha\in \E(e_1,e_2)$ such that $P\alpha$ is an arrow in $U_i$.

By the homotopy lifting property applied to the following diagram:
$$
\begin{tikzcd}
P^{-1}(U) \arrow[d, "i_0"] \arrow[rrrr, hook]                                                                        & {} \arrow[rrr] &                                             &  & \E \arrow[d, "P"] \\
P^{-1}(U)\times \mathcal{I}_{k_i} \arrow[rrrru, "\wtilde C^i", dashed,end anchor={[shift={(1pt,0pt)}]south west} ] \arrow[rr, "{P \times \id}"'] &                & U \times \mathcal{I}_{k_i} \arrow[rr, "C^i"'] &  & \Ba              
\end{tikzcd}
$$
we have a homotopy $\wtilde C^i: U_i \times \mathcal{I}_{k_i}\to \E$ such that ${\wtilde C^i}_0$ is the inclusion $P^{-1}(U_i)\hookrightarrow \E$ and ${\wtilde C^i}_{k_i}$ lies inside the fiber $\E_{b}$.

For each $i\in\{1,\dots,m\}$ and $j\in\{1,\dots,n\}$ we define 
$$
W_{i,j}=P^{-1}(U_i) \cap ({\wtilde C^i}_{k_i})^{-1}(V_j).
$$
We claim that $\{W_{i,j}\}_{0 \leq i \leq m,0\leq j\leq n}$ is a geometric cover of $\E$ such that each $W_{i,j}$ is a homotopy domain for $F$ and $G$.
    
    (1) $\{W_{i,j}\}
    $ 
    is a geometric cover of $\E$.
    
    Let 
    $$
\mathcal{C}\colon  C_1 \longrightarrow  C_2\longrightarrow \cdots \longrightarrow C_l
$$ be a chain in $\E$. Then  we obtain the chain  
$$
P(\CC)\colon P(C_1)  \longrightarrow  P(C_2) \longrightarrow \cdots  \longrightarrow  P(C_l)
$$
    in $\Ba$. Since $\{U_0,...,U_m\}$ is a geometric cover of $\Ba$, there is some $i$ such that the chain $P\CC)$ lies in $U_i$, so  the chain $\CC$ lies in $P^{-1}(U_i)$. Moreover, the functor ${\wtilde C^i}_{k_i}$ is  defined in $\CC$, hence we have a new chain ${\wtilde C}^i_{k_i}(\CC)$ that lies in the fiber $\E_b$. Now, we know that $\{V_j\}$ is a geometric cover of $F_b$,  so ${\wtilde C}^i_{k_i}(\CC)$ lies  in some $V_j$.
    We conclude that $\CC$ is in $W_{i,j}$.
   
   (2) Each $W_{i,j}$ is a homotopy domain for $F$ and $G$.
    
    For the sake of simplicity we change the notations as follows: $U=U_i$, $\wtilde C=\wtilde C^i$, $k=k_i$, $V=V_j$.
    
     Let $K\colon V \times \mathcal{I}_l \rightarrow \E_b$ be a homotopy between $F\vert_V$ and $G\vert_V$ and let $\iota\colon \E_b \hookrightarrow \E$ be the inclusion of the fiber into the total category $\E$. The homotopy that we need is the  functor 
     $$H: W_{i,j} \times \mathcal{I}_{k+l+k} \rightarrow \E'$$
     given by
    $$H(c,n)= 
    \begin{cases}
             F\wtilde C(c,n)   &\text{if}\quad   0 \leq n \leq k \cr
             \iota K(\wtilde C_kc,n-k)  &\text{if} \quad k \leq n \leq k+l \cr
             G\wtilde C(c,k+l+k-n)  &\text{if}\quad  k+l \leq n \leq k+l+k\cr
\end{cases}.$$
   It only remains to check that   $H$ is well defined and that it is the wanted homotopy: 
   \begin{align*}
      H(c,0)=& F\wtilde C(c,0)=Fc \text{ since } \wtilde C_0 \text { is the inclusion},\\
      \iota K(\wtilde C_kc,0)=&\iota F(\wtilde C_kc)=F\wtilde C(c,k),\\
        \iota K(\wtilde C_kc,l)=&\iota G\wtilde C_kc=G\wtilde C(c,k),\\
       H(c,k+l+k)=&G\wtilde C(c,0)=Gc \text{ since } \wtilde C_0 \text{ is the inclusion}.
   \end{align*}
This proves that $\Di(F,G)\leq m+n$, as stated.
\end{proof}

\end{document}